\newtheorem{thm}{Theorem}[section]
\newtheorem{lem}[thm]{Lemma}
\newtheorem{prop}[thm]{Proposition}
\theoremstyle{definition}
\newtheorem{defn}[thm]{Definition}
\theoremstyle{remark}
\begin{document}

\title[Pandiagonal Type-$p$ Franklin Squares]{Pandiagonal Type-$p$ Franklin Squares}
\author{John Lorch}
\address{Department of Mathematical Sciences\\ Ball State University\\Muncie, IN  47306-0490}
\email{jlorch@bsu.edu}
\subjclass[2010]{05B30}
\date{December 27, 2017}
\begin{abstract}
For prime $p$ we define magic squares of order $kp^3$, called type-$p$ Franklin squares, whose properties specialize to those of classical Franklin squares in the case $p=2$. We construct type-$p$ Franklin squares in prime-power orders.
\end{abstract}
\maketitle

\def\a{{\bf a}}
\def\MF{M^{2\times 2}(\F )}
\def\rtimes{{\times\!\! |}}
\def\rk{{\rm rank}}
\def\A{{\mathcal A}}
\def\reg{{\mathcal R}}
\def\P{{\mathcal P}}
\def\End{{\rm End}}
\def\F{{\mathbb F}}
\def\calF{{\mathcal F}}
\def\GL{{\rm GL}}
\def\L{{\mathcal L}}
\def\R{{\mathbb R}}
\def\N{{\mathbb N}}
\def\C{{\mathbb C}}
\def\Z{{\mathbb Z}}
\def\B{{\mathbb B}}
\def\G{{\mathcal G}}
\def\g{\mathfrak g}
\def\OOA{{\rm OOA}}
\def\OA{{\rm OA}}
\def\S{{\mathcal S}}
\def\SA{{\rm SA}}
\def\SD{{\rm SD}}
\def\TD{{\rm TD}}
\def\Q{{\mathcal Q}}
\def\pt#1{{\langle  #1 \rangle}}
\def\ds{\displaystyle}
\def\v{{\bf v}}
\def\w{{\bf w}}

\makeatletter
\def\Ddots{\mathinner{\mkern1mu\raise\p@
\vbox{\kern7\p@\hbox{.}}\mkern2mu
\raise4\p@\hbox{.}\mkern2mu\raise7\p@\hbox{.}\mkern1mu}}
\makeatother

\section{Introduction}

\subsection{Purpose, Briefly Stated}
For prime $p$ we define magic squares of order $kp^3$, called type-$p$ Franklin squares, whose properties specialize to those of classical Franklin squares in the case $p=2$. We construct such squares in prime power orders. Our construction is motivated by a relationship, first noted in \cite{pL06} and further explored in \cite{rN16}, between classical most-perfect magic squares of triply even order and pandiagonal classical Franklin squares.

\subsection{Franklin Squares}
Classical {\bf Franklin squares} are natural semi-magic squares of doubly even order  first constructed by Benjamin Franklin in the mid 1730's (two in order 8, one in order 16) to fend off  boredom while clerking in the Pennsylvania Assembly. They have the following additional magic properties:
\begin{itemize}
\item[(i)] Half-rows and half-columns add to half of the magic sum.
\item[(ii)] The symbols in any $2\times 2$ subsquare formed from consecutive rows and columns (allowing toric wraparound) sum to $\displaystyle \frac{p^2(n^2-1)}{2}$.
\item[(iii)] Entries in each set of {\bf bent diagonals} add to the magic sum. Bent diagonals come in four varieties: up, right, down, and left. An up-diagonal is formed by half of a broken main diagonal (allowing vertical wraparound) beginning at the left edge of the square, together with its reflection across the vertical midline. The right, down, and left varieties are obtained from the up-diagonal locations by $90^\circ$, $180^\circ$, and $270^\circ$ clockwise rotations of the ambient square, respectively.
\end{itemize}
Franklin's famous order-8 square is shown in Figure \ref{f:franklinex}.

\begin{figure}[h]
{\tiny
$$
\begin{array}{|c | c | c | c | c | c | c | c|}
\hline
51 & 60 & 3 & 12 & 19 & 28 & 35 & 44  \\
\hline
13  & 2 & 61 & 50 & 45 & 34 & 29 & 18 \\
\hline
52 & 59 & 4 & 11 & 20 & 27 & 36 & 43  \\
\hline
 10 & 5 & 58 & 53& 42 & 37 & 26 & 21  \\
\hline
 54 & 57 & 6 & 9 & 22 & 25 & 38& 41  \\
\hline
 8 &  7& 56 & 55 & 40& 39 & 24 & 23  \\
\hline
49 & 62 & 1 & 14 & 17 & 30 & 33 & 46  \\
\hline
15 & 0 & 63 & 48 & 47 & 32 & 31& 16 \\
\hline
    \end{array}
\qquad
\begin{array}{|c | c | c | c | c | c | c | c|}
\hline
\ & \ & \ & 12 & 19 & \ & \ & \  \\
\hline
13  & \ & \ & 50 & 45 & \ & \ & 18 \\
\hline
\ & 59 & \ & \ & \ & \ & 36 & \  \\
\hline
 \ &\ & 58 & \ & \ & 37 & \ &  \  \\
\hline
 \ & \ & \ & 9 & 22 & \ & 38& \  \\
\hline
 \ &  \ & \ & \ & \ & \ & 24 & \  \\
\hline
49 & 62 & 1 & 14 & \ & \ & 33 & \  \\
\hline
\ & \ & \  & \ & \ & \ & 31 & \ \\
\hline
    \end{array}
   $$}
    \caption{Left: Franklin's famous order-8 square with symbols $0$ through $63$.  Right: An indication of its properties. Numbers $13, 59,\dots ,36, 18$ form an up-diagonal.}
    \label{f:franklinex}
\end{figure}

Investigation of classical Franklin squares largely fits into three categories. The first is historical:
Franklin's method of constructing his squares remains unknown. His correspondence makes only brief mention of them, including a lament concerning the time he wasted in such activities. Pasles' article \cite{pP01} contains a thorough historical account of Franklin's squares and a survey of methods he may have used to construct them. The most plausible of these methods appears to be the one conjectured in \cite{cJ71}. Another category is existential: The definition of classical Franklin squares allows for doubly even orders, but the only Franklin squares that have been discovered thus far are of triply even order. Franklin squares exist in orders $8k$ for each $k\in\Z ^+$ (e.g., \cite{cJ71} and \cite{rN16}). Meanwhile, there are no Franklin squares of order $4$ or $12$ (see \cite{cH07}), and the existential question is unresolved for other orders of the form $8k+4$.
The third category concerns construction and enumeration: One example is \cite{mA04}, in which Hilbert bases for polyhedral cones are used to place an upper bound on the number of Franklin squares. Another example is \cite{pL06}, in which an involution on arrays is used to define an injection from the set of most-perfect squares of order $8$ to the set of pandiagonal Franklin squares of order $8$, thus giving a reasonable lower bound on the number of order-$8$ Franklin squares. Importantly, this latter work was generalized in \cite{rN16} to squares of order $8k$ for any $k\in \Z^+$.

\subsection{Most-perfect Squares}
This article makes vital use of most-perfect squares. Let $n$ be a natural number divisible by $p$. A natural pandiagonal magic square $R$ of order $n$ is said to be a {\bf most-perfect square of type-$p$} if the following two properties hold:
  \begin{itemize}
  \item[(i)] ({\bf Complementary property}) Starting from any location in $R$, consider the symbol in that location together with the $p-1$ other symbols lying in the same broken main-diagonal $n/p$ units apart from one another. The sum of these symbols is $\displaystyle \frac{p(n^2-1)}{2}$.
  \item[(ii)] ({\bf $p\times p$ property}) The symbols in any $p\times p$ subsquare formed from consecutive rows and columns (allowing toric wraparound) sum to $\displaystyle \frac{p^2(n^2-1)}{2}$.
  \end{itemize}

 Examples of type-$2$ and type-$3$ most-perfect squares are given in Figure \ref{f:mps8}.

\begin{figure}[h]
{\tiny
$$
\begin{array}{|cccc|cccc|}
\hline
 0 & 31 & 48 & 47 & 56 & 39 & 8 & 23 \\
 59 & 36 & 11 & 20 & 3 & 28 & 51 & 44 \\
 6 & 25 & 54 & 41 & 62 & 33 & 14 & 17 \\
 61 & 34 & 13 & 18 & 5 & 26 & 53 & 42 \\
 \hline
 7 & 24 & 55 & 40 & 63 & 32 & 15 & 16 \\
 60 & 35 & 12 & 19 & 4 & 27 & 52 & 43 \\
 1 & 30 & 49 & 46 & 57 & 38 & 9 & 22 \\
 58 & 37 & 10 & 21 & 2 & 29 & 50 & 45 \\
 \hline
\end{array}
\qquad
\begin{array}{|ccc|ccc|ccc|}
\hline
 0 & 16 & 23 & 63 & 79 & 59 & 45 & 34 & 41 \\
 64 & 80 & 57 & 46 & 35 & 39 & 1 & 17 & 21 \\
 47 & 33 & 40 & 2 & 15 & 22 & 65 & 78 & 58 \\
 \hline
 7 & 14 & 18 & 70 & 77 & 54 & 52 & 32 & 36 \\
 71 & 75 & 55 & 53 & 30 & 37 & 8 & 12 & 19 \\
 51 & 31 & 38 & 6 & 13 & 20 & 69 & 76 & 56 \\
 \hline
 5 & 9 & 25 & 68 & 72 & 61 & 50 & 27 & 43 \\
 66 & 73 & 62 & 48 & 28 & 44 & 3 & 10 & 26 \\
 49 & 29 & 42 & 4 & 11 & 24 & 67 & 74 & 60 \\
 \hline
\end{array}
$$}
    \caption{Left: A type-$2$ (classical) most-perfect square of order-8. Right: A type-$3$ most-perfect square
    of order $9$. The gridlines serve as an aid in locating complementary entries.}
    \label{f:mps8}
\end{figure}

Type-$p$ most-perfect squares specialize to classical most-perfect squares when $p=2$, in which case $n$ must be doubly even \cite{cP19}. The tasks of counting and constructing classical most-perfect squares were first approached by McClintock \cite{eM97} and culminate in the work of Ollerenshaw and Bree \cite{kO98}, which gives a count of the classical most-perfect squares for any doubly even order $n$, along with a construction method for all such squares. As mentioned above, classical most-perfect squares are used in \cite{pL06} and \cite{rN16} for constructing Franklin squares. When $p\geq 2$, a linear construction of type-$p$ most-perfect squares of order $p^r$ ($r\geq 2$) is given in \cite{jL18}.

\subsection{Type-$p$ Franklin Squares}\label{ss:dubya}
Let $p$ be prime. We say that a natural square $S$ of order $n=kp^3$ is a {\bf Franklin square of type $p$} if it has the following properties:
\begin{itemize}
\item ({\bf $p\times p$ property}): This is as described above for type-$p$ most-perfect squares.
\item ({\bf $1/p$-property for both rows and columns}): We say that $S$ possesses the $1/p$ column property if upon splitting a column of $R$ naturally into $p$ parts, the entries in each part add to  $\frac{1}{p}$ times the magic sum, or rather
$\displaystyle \frac{n(n^2-1)}{2p}$. The {\bf $1/p$ row property} is defined similarly.
\item ({\bf Franklin pattern property}):  The numbers in every Franklin pattern in $S$ add to the magic sum $\ds \frac{n(n^2-1)}{2}$. Franklin patterns specialize to bent diagonals in the case $p=2$. A detailed description of these patterns is given in Section \ref{s:bent}.
\end{itemize}

An example of a type-$3$ Franklin square of order $27$ is given below.

 \scalebox{0.5}{\parbox{\linewidth}{$$
\begin{array}{|ccccccccccccccccccccccccccc|}
\hline
 0 & 691 & 401 & 432 & 151 & 509 & 621 & 259 & 212 & 567 & 286 & 239 & 27 & 718 & 347 & 459 & 97 & 536 & 405 & 124 & 563 & 594 & 313 & 185 & 54 & 664 & 374 \\
 457 & 140 & 495 & 646 & 248 & 198 & 25 & 680 & 387 & 52 & 707 & 333 & 484 & 86 & 522 & 592 & 275 & 225 & 619 & 302 & 171 & 79 & 653 & 360 & 430 & 113 & 549 \\
 \hline \hline
 \boxed{635} & 261 & \multicolumn{1}{c|}{196} & 14 & 693 & \multicolumn{1}{c|}{385} & 446 & 153 & \multicolumn{1}{c||}{493} & 473 & 99 & \multicolumn{1}{c|}{520} & 581 & 288 & \multicolumn{1}{c|}{223} & 41 & 720 & \multicolumn{1}{c||}{331} & 68 & 666 & \multicolumn{1}{c|}{358} & 419 & 126 & \multicolumn{1}{c|}{547} & 608 & \boxed{315} & \boxed{169} \\
 16 & \boxed{698} &   \multicolumn{1}{c|}{\boxed{378}} & 448 & 158 & \multicolumn{1}{c|}{486} & 637 & 266 & \multicolumn{1}{c||}{189} & 583 & 293 & \multicolumn{1}{c|}{216} & 43 & 725 & \multicolumn{1}{c|}{324} & 475 & 104 & \multicolumn{1}{c||}{513} & 421 & 131 & \multicolumn{1}{c|}{540} & 610 & 320 & \multicolumn{1}{c|}{162} & \boxed{70} & 671 & 351 \\
 437 & 144 & \multicolumn{1}{c|}{511} & 626 & 252 & \multicolumn{1}{c|}{214} & 5 & 684 & \multicolumn{1}{c||}{403} & 32 & 711 & \multicolumn{1}{c|}{349} & \boxed{464} &\boxed{90} & \multicolumn{1}{c|}{\boxed{538}} & 572 & 279 & \multicolumn{1}{c||}{241} & 599 & 306 & \multicolumn{1}{c|}{187} & 59 & 657 & \multicolumn{1}{c|}{376} & 410 & 117 & 565 \\
 \hline
 639 & 250 & \multicolumn{1}{c|}{203} & \boxed{18} & 682 & \multicolumn{1}{c|}{392} & 450 & 142 & \multicolumn{1}{c||}{500} & 477 & 88 & \multicolumn{1}{c|}{527} & 585 & 277 & \multicolumn{1}{c|}{230} & 45 & 709 & \multicolumn{1}{c||}{338} & 72 & 655 & \multicolumn{1}{c|}{365} & 423 & \boxed{115} & \multicolumn{1}{c|}{\boxed{554}} & 612 & 304 & 176 \\
 23 & 675 & \multicolumn{1}{c|}{394} & 455 & \boxed{135} & \multicolumn{1}{c|}{\boxed{502}} & 644 & 243 & \multicolumn{1}{c||}{205} & 590 & 270 & \multicolumn{1}{c|}{232} & 50 & 702 & \multicolumn{1}{c|}{340} & 482 & 81 & \multicolumn{1}{c||}{529} & 428 & 108 & \multicolumn{1}{c|}{556} & \boxed{617} & 297 & \multicolumn{1}{c|}{178} & 77 & 648 & 367 \\
 441 & 160 & \multicolumn{1}{c|}{491} & 630 & 268 & \multicolumn{1}{c|}{194} & 9 & 700 & \multicolumn{1}{c||}{383} & 36 & \boxed{727} & \multicolumn{1}{c|}{\boxed{329}} & 468 & 106 & \multicolumn{1}{c|}{518} & \boxed{576} & 295 & \multicolumn{1}{c||}{221} & 603 & 322 & \multicolumn{1}{c|}{167} & 63 & 673 & \multicolumn{1}{c|}{356} & 414 & 133 & 545 \\
 \hline
 628 & 257 & \multicolumn{1}{c|}{207} & 7 & 689 & \multicolumn{1}{c|}{396} & \boxed{439} & 149 & \multicolumn{1}{c||}{504} & 466 & 95 & \multicolumn{1}{c|}{531} & 574 & 284 & \multicolumn{1}{c|}{234} & 34 & 716 & \multicolumn{1}{c||}{342} & 61 & \boxed{662} & \multicolumn{1}{c|}{\boxed{369}} & 412 & 122 & \multicolumn{1}{c|}{558} & 601 & 311 & 180 \\
 21 & 676 & \multicolumn{1}{c|}{395} & 453 & 136 & \multicolumn{1}{c|}{503} & 642 & \boxed{244} & \multicolumn{1}{c||}{\boxed{206}} & 588 & 271 & \multicolumn{1}{c|}{233} & 48 & 703 & \multicolumn{1}{c|}{341} & 480 & 82 & \multicolumn{1}{c||}{530} & \boxed{426} & 109 & \multicolumn{1}{c|}{557} & 615 & 298 & \multicolumn{1}{c|}{179} & 75 & 649 & 368 \\
 442 & 161 & \multicolumn{1}{c|}{489} & 631 & 269 & \multicolumn{1}{c|}{192} & 10 & 701 & \multicolumn{1}{c||}{381} & \boxed{37} & 728 & \multicolumn{1}{c|}{327} & 469 & 107 & \multicolumn{1}{c|}{516} & 577 & \boxed{296} & \multicolumn{1}{c||}{\boxed{219}} & 604 & 323 & \multicolumn{1}{c|}{165} & 64 & 674 & \multicolumn{1}{c|}{354} & 415 & 134 & 543 \\
 \hline \hline
 629 & 255 & 208 & 8 & 687 & 397 & 440 & 147 & 505 & 467 & 93 & 532 & 575 & 282 & 235 & 35 & 714 & 343 & 62 & 660 & 370 & 413 & 120 & 559 & 602 & 309 & 181 \\
 1 & 692 & 399 & 433 & 152 & 507 & 622 & 260 & 210 & 568 & 287 & 237 & 28 & 719 & 345 & 460 & 98 & 534 & 406 & 125 & 561 & 595 & 314 & 183 & 55 & 665 & 372 \\
 458 & 138 & 496 & 647 & 246 & 199 & 26 & 678 & 388 & 53 & 705 & 334 & 485 & 84 & 523 & 593 & 273 & 226 & 620 & 300 & 172 & 80 & 651 & 361 & 431 & 111 & 550 \\
 633 & 262 & 197 & 12 & 694 & 386 & 444 & 154 & 494 & 471 & 100 & 521 & 579 & 289 & 224 & 39 & 721 & 332 & 66 & 667 & 359 & 417 & 127 & 548 & 606 & 316 & 170 \\
 17 & 696 & 379 & 449 & 156 & 487 & 638 & 264 & 190 & 584 & 291 & 217 & 44 & 723 & 325 & 476 & 102 & 514 & 422 & 129 & 541 & 611 & 318 & 163 & 71 & 669 & 352 \\
 435 & 145 & 512 & 624 & 253 & 215 & 3 & 685 & 404 & 30 & 712 & 350 & 462 & 91 & 539 & 570 & 280 & 242 & 597 & 307 & 188 & 57 & 658 & 377 & 408 & 118 & 566 \\
 640 & 251 & 201 & 19 & 683 & 390 & 451 & 143 & 498 & 478 & 89 & 525 & 586 & 278 & 228 & 46 & 710 & 336 & 73 & 656 & 363 & 424 & 116 & 552 & 613 & 305 & 174 \\
 15 & 697 & 380 & 447 & \boxed{157} & 488 & 636 & 265 & 191 & 582 & 292 & 218 & 42 & 724 & 326 & 474 & 103 & 515 & 420 & 130 & 542 & 609 & 319 & 164 & 69 & 670 & 353 \\
 436 & 146 & 510 & 625 & \boxed{254} & 213 & 4 & 686 & 402 & 31 & 713 & 348 & \boxed{463} & \boxed{92} & \boxed{537} & 571 & 281 & 240 & 598 & 308 & 186 & 58 & 659 & 375 & 409 & 119 & 564 \\
 641 & 249 & 202 & 20 & \boxed{681} & 391 & 452 & 141 & 499 & 479 & 87 & 526 & \boxed{587} & \boxed{276} & \boxed{229} & 47 & 708 & 337 & 74 & 654 & 364 & 425 & 114 & 553 & 614 & 303 & 175 \\
 22 & 677 & 393 & 454 & \boxed{137} & 501 & 643 & 245 & 204 & 589 & 272 & 231 & \boxed{49} & \boxed{704} & \boxed{339} & 481 & 83 & 528 & 427 & 110 & 555 & 616 & 299 & 177 & 76 & 650 & 366 \\
 \boxed{443} & \boxed{159} & \boxed{490} &\boxed{632} & \boxed{267} & \boxed{193} & \boxed{11} & \boxed{699} & \boxed{382} & 38 & 726 & 328 & 470 & 105 & 517 & 578 & 294 & 220 & 605 & 321 & 166 & 65 & 672 & 355 & 416 & 132 & 544 \\
 627 & 256 & 209 & 6 & \boxed{688} & 398 & 438 & 148 & 506 & 465 & 94 & 533 & 573 & 283 & 236 & 33 & 715 & 344 & 60 & 661 & 371 & 411 & 121 & 560 & 600 & 310 & 182 \\
 2 & 690 & 400 & 434 & \boxed{150} & 508 & 623 & 258 & 211 & 569 & 285 & 238 & 29 & 717 & 346 & 461 & 96 & 535 & 407 & 123 & 562 & 596 & 312 & 184 & 56 & 663 & 373 \\
 456 & 139 & 497 & 645 & \boxed{247} & 200 & 24 & 679 & 389 & 51 & 706 & 335 & 483 & 85 & 524 & 591 & 274 & 227 & 618 & 301 & 173 & 78 & 652 & 362 & 429 & 112 & 551 \\
 634 & 263 & 195 & 13 & \boxed{695} & 384 & 445 & 155 & 492 & 472 & 101 & 519 & 580 & 290 & 222 & 40 & 722 & 330 & 67 & 668 & 357 & 418 & 128 & 546 & 607 & 317 & 168 \\
 \hline
\end{array}
    $$}}

In the lower region of this square the boxed entries indicate the $1/3$-row and column properties and the $3\times 3$ property. In the upper portion of the square we observe a collection of boxed entries sitting within a frame of $3\times 3$ subsquares. These boxed entries, when taken together, look like the letter ``W." This collection of entries is a Franklin-up pattern. These entries add to the magic sum and can be translated vertically throughout the square (with vertical wraparound). There are also analogous downward Franklin patterns, as well as left and right versions. A detailed description of Franklin patterns is given in Section \ref{s:bent}. In Sections \ref{s:cube} and \ref{s:greater} we show that type-$p$ Franklin squares exist in orders $p^r$ with $r\geq 3$.

Inspiration for these results comes chiefly from \cite{pL06} and \cite{rN16}, where the authors introduce an involution $\theta$ that maps classical most-perfect squares into pandiagonal classical Franklin squares. This involution may be generalized (see Section \ref{s:involution}) so that it applies to type-$p$ most-perfect squares, examples of which exist in all orders $p^r$ with $r\geq 2$ by \cite{jL18}. Therefore, in searching for a reasonable definition for type-$p$ Franklin squares, one could do worse than studying
$\theta (R)$ where $R$ is a type-$p$ most-perfect square. One readily finds that $\theta (R)$ is pandiagonal, has the $p\times p$ property, and has the $1/p$-row and column properties (see Section \ref{s:involution}). Determining reasonable  Franklin patterns is considerably harder, but we are guided by the complementary property of $R$ and Lemma \ref{l:moremoresums2} (see Sections \ref{s:cube} and \ref{s:greater}). The type-$3$ order-$27$ Franklin square given above has the form $\theta (R)$, where $R$ is a (linear) most-perfect square constructed using the method of \cite{jL18}.

\section{An Involution and its Application to Most-Perfect Squares of Type-$p$}\label{s:involution}
Let $n=kp^r$ with $r\geq 2$ and let $R$ be an array of order $n$. We may view $R$ as an order-$p^2$ array
\begin{equation}\label{e:arrayR}
R=(R_{i,j}) \quad \text{with}\quad 0\leq i,j\leq p^2-1,
\end{equation}
 where each $R_{i,j}$ is an array of order $\frac{n}{p^2}$. We define an involution $\theta$ on arrays of order $n$ by
\begin{equation}\label{e:theta}
[\theta (R)]_{i,j}=R_{\bar i,\bar j}
\end{equation}
where, if $i=\ell p +m$ with $\ell ,m \in \{0,1,\dots ,p-1\}$ then $\bar i= mp+\ell$.
We emphasize that $\theta$ depends on $p$.

By way of illustration, if $p=2$ then
$$
R={\tiny \begin{array}{|c|c||c|c|} \hline R_{0,0} &R_{0,1} &R_{0,2} & R_{0,3} \\ \hline
R_{1,0} & R_{1,1} & R_{1,2} & R_{1,3} \\ \hline\hline R_{2,0} & R_{2,1} & R_{2,2} & R_{2,3} \\ \hline
R_{3,0}& R_{3,1} & R_{3,2} & R_{3,3}\\ \hline \end{array}}
\quad \Longrightarrow
\theta (R)=
{\tiny \begin{array}{|c|c||c|c|} \hline R_{0,0} &R_{0,2} &R_{0,1} & R_{0,3} \\ \hline
R_{2,0} & R_{2,2} & R_{2,1} & R_{2,3} \\ \hline \hline R_{1,0} & R_{1,2} & R_{1,1} & R_{1,3} \\ \hline
R_{3,0}& R_{3,2} & R_{3,1} & R_{3,3}\\ \hline \end{array}.}
   $$

Likewise, if $p=3$ then

$$
R={\tiny \begin{array}{|ccc||ccc||ccc|}
 \hline
 R_{0,0} &R_{0,1} &R_{0,2} & R_{0,3}& R_{0,4} & R_{0,5} & R_{0,6} & R_{0,7} & R_{0,8} \\
 R_{1,0} &R_{1,1} &R_{1,2} & R_{1,3}& R_{1,4} & R_{1,5} & R_{1,6} & R_{1,7} & R_{1,8} \\
 R_{2,0} &R_{2,1} &R_{2,2} & R_{2,3}& R_{2,4} & R_{2,5} & R_{2,6} & R_{2,7} & R_{2,8} \\
 \hline\hline
 R_{3,0} &R_{3,1} &R_{3,2} & R_{3,3}& R_{3,4} & R_{3,5} & R_{3,6} & R_{3,7} & R_{3,8} \\
 R_{4,0} &R_{4,1} &R_{4,2} & R_{4,3}& R_{4,4} & R_{4,5} & R_{4,6} & R_{4,7} & R_{4,8} \\
 R_{5,0} &R_{5,1} &R_{5,2} & R_{5,3}& R_{5,4} & R_{5,5} & R_{5,6} & R_{5,7} & R_{5,8} \\
 \hline\hline
 R_{6,0} &R_{6,1} &R_{6,2} & R_{6,3}& R_{6,4} & R_{6,5} & R_{6,6} & R_{6,7} & R_{6,8} \\
 R_{7,0} &R_{7,1} &R_{7,2} & R_{7,3}& R_{7,4} & R_{7,5} & R_{7,6} & R_{7,7} & R_{7,8} \\
 R_{8,0} &R_{8,1} &R_{8,2} & R_{8,3}& R_{8,4} & R_{8,5} & R_{8,6} & R_{8,7} & R_{8,8} \\
\hline
 \end{array}}
   $$
implies
$$
\theta (R )=
{\tiny \begin{array}{|ccc||ccc||ccc|}
 \hline
 R_{0,0} &R_{0,3} &R_{0,6} & R_{0,1}& R_{0,4} & R_{0,7} & R_{0,2} & R_{0,5} & R_{0,8} \\
 R_{3,0} &R_{3,3} &R_{3,6} & R_{3,1}& R_{3,4} & R_{3,7} & R_{3,2} & R_{3,5} & R_{3,8} \\
 R_{6,0} &R_{6,3} &R_{6,6} & R_{6,1}& R_{6,4} & R_{6,7} & R_{6,2} & R_{6,5} & R_{6,8} \\
 \hline\hline
 R_{1,0} &R_{1,3} &R_{1,6} & R_{1,1}& R_{1,4} & R_{1,7} & R_{1,2} & R_{1,5} & R_{1,8} \\
 R_{4,0} &R_{4,3} &R_{4,6} & R_{4,1}& R_{4,4} & R_{4,7} & R_{4,2} & R_{4,5} & R_{4,8} \\
 R_{7,0} &R_{7,3} &R_{7,6} & R_{7,1}& R_{7,4} & R_{7,7} & R_{7,2} & R_{7,5} & R_{7,8} \\
 \hline\hline
 R_{2,0} &R_{2,3} &R_{2,6} & R_{2,1}& R_{2,4} & R_{2,7} & R_{2,2} & R_{2,5} & R_{2,8} \\
 R_{5,0} &R_{5,3} &R_{5,6} & R_{5,1}& R_{5,4} & R_{5,7} & R_{5,2} & R_{5,5} & R_{5,8} \\
 R_{8,0} &R_{8,3} &R_{8,6} & R_{8,1}& R_{8,4} & R_{8,7} & R_{8,2} & R_{8,5} & R_{8,8} \\
\hline
 \end{array}.}
 $$
The mapping $\theta$ specializes to the involution given in \cite{pL06} in the case $p=2$ and $n=8$; the reader may check that if $R$ is the square in the left portion of Figure \ref{f:mps8}, then $\theta (R)$ is a Franklin square of order $8$.

It is our intention to provide examples of type-$p$ Franklin squares by applying $\theta$ to most-perfect squares of type $p$. We begin this process over the next several results, culminating
in Proposition \ref{p:culminate}.

\begin{prop}\label{p:realpbyp}
Suppose $n$ is triply divisible by $p$ and  that $R$ is a square of order $n$ possessing the
$p\times p$ property. Then $\theta (R)$ has the $p\times p$ property.
\end{prop}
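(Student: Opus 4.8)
The plan is to show that every $p\times p$ subsquare of $\theta(R)$ is, up to the relocation of entries induced by $\theta$, a sum over $R$ of a rectangular array of cells whose row and column index sets each meet every residue class modulo $p$ exactly once; the $p\times p$ property of $R$ then pins that sum to $\ds\frac{p^2(n^2-1)}{2}$, which I abbreviate $C$.

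I would begin with the cell-level description of $\theta$. Write $N=n/p^2$ and let $R(r,c)$ denote the entry of $R$ in global row $r$ and column $c$; decomposing $r=iN+a'$ with $0\le a'\le N-1$, formula \eqref{e:theta} says that $[\theta(R)](iN+a',\,jN+b')=R(\bar iN+a',\,\bar jN+b')$. Hence $\theta$ relocates entries by the row bijection $\phi(iN+a')=\bar iN+a'$ together with the analogous column bijection $\psi$. The sole use of the hypothesis that $n$ is triply divisible by $p$ is that $p\mid N$, whence $\phi(r)\equiv a'\equiv r\pmod p$, and likewise for $\psi$: \emph{both $\phi$ and $\psi$ preserve residues modulo $p$.} Since $p\mid n$, any $p$ consecutive rows (with toric wraparound) form a transversal of the residue classes mod $p$, and therefore so does their $\phi$-image $\mathcal A$; similarly $p$ consecutive columns map to a transversal $\mathcal B$.

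The arithmetic core I would isolate as a lemma about $R$ alone: \emph{if $R$ has the $p\times p$ property, then for any set $\mathcal A$ of $p$ rows and any set $\mathcal B$ of $p$ columns, each a transversal of the residue classes mod $p$, one has} $\sum_{r\in\mathcal A}\sum_{c\in\mathcal B}R(r,c)=C$. To prove it, note first that all consecutive $p\times p$ windows equal $C$, so comparing the windows based at $(a,b)$ and $(a{+}1,b)$ shows the horizontal $p$-sum $H(r,b)=\sum_{c=b}^{b+p-1}R(r,c)$ satisfies $H(r{+}p,b)=H(r,b)$ for all $r$, and symmetrically the vertical $p$-sum is period-$p$ in its column index. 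From $\sum_{c=b}^{b+p-1}[R(r{+}p,c)-R(r,c)]=0$ for every $b$ one gets that $R(r{+}p,c)-R(r,c)$ is period-$p$ in $c$ and sums to zero over one period; as $\mathcal B$ is a transversal, $\sum_{c\in\mathcal B}R(r,c)$ is then period-$p$ in $r$. Summing over the transversal $\mathcal A$ replaces it by rows $0,\dots,p-1$, and a symmetric collapse in the column variable replaces $\mathcal B$ by columns $0,\dots,p-1$, leaving the single window $\sum_{r=0}^{p-1}\sum_{c=0}^{p-1}R(r,c)=C$.

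Combining the two steps finishes the proof with no case analysis: an arbitrary $p\times p$ subsquare of $\theta(R)$ pulls back under $\phi\times\psi$ to the product array $\mathcal A\times\mathcal B$ in $R$ with $\mathcal A,\mathcal B$ residue transversals, whose entries sum to $C$ by the lemma, and these are exactly the entries of the original subsquare. I expect the main obstacle to be conceptual rather than computational, namely the windows of $\theta(R)$ that straddle a block boundary: such a window does \emph{not} pull back to consecutive rows and columns of $R$, so the naive ``$\theta$ sends windows to windows'' argument breaks down. The transversal formulation of the $p\times p$ property is precisely what absorbs this difficulty, since the pulled-back index sets are residue transversals whether or not a boundary is crossed.
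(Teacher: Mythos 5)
Your argument is correct, but it is organized differently from the paper's. The paper first recasts the $p\times p$ property as a statement about $(p+1)\times(p+1)$ windows (the top row's $p$-sum equals the bottom row's, and likewise for columns --- exactly the period-$p$ statement you derive by differencing consecutive windows), then factors $\theta=\theta_{col}\circ\theta_{row}$ and realizes $\theta_{row}$ as a sequence of swaps of rows lying $|i-\bar i|\,(n/p^2)$ apart; since $p\mid n/p^2$, each swap exchanges two rows with identical horizontal $p$-sums and hence leaves every window sum unchanged. You instead prove a single stronger lemma --- that the sum of $R$ over any set of $p$ rows and any set of $p$ columns, each a transversal of the residue classes mod $p$, equals $\frac{p^2(n^2-1)}{2}$ --- and combine it with the cell-level observation that the row and column permutations induced by (\ref{e:theta}) preserve residues mod $p$. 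Both proofs rest on the same two facts (periodicity of the $p$-sums and $p\mid n/p^2$), but your route buys more: your lemma shows that \emph{any} residue-preserving permutation of rows and columns preserves the $p\times p$ property, not just $\theta$, and it handles the windows that straddle block boundaries without the swap-by-swap bookkeeping; it is also close in spirit to the transversal machinery (Lemmas \ref{l:diagsum} and \ref{l:transversalsum}) that the paper only brings in later for pandiagonality. The paper's version is shorter and reuses the $\theta_{row}/\theta_{col}$ factorization, which is convenient elsewhere (e.g.\ in Proposition \ref{p:pcolprop}).
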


\begin{proof}
Observe that $R$ has the $p\times p$ property if and only if for any $(p+1)\times (p+1)$-subsquare $A$ of $R$ formed from consecutive rows and columns (allowing wraparound), with
$$
A=\begin{array}{|cccc|c|}
\hline
a_{11} & a_{12} & \dots & a_{1p} & a_{1,p+1}\\
a_{21} & a_{22} & \dots & a_{2p} & a_{2,p+1}\\
\vdots & \vdots & \vdots& \vdots &\vdots \\
a_{p1} & a_{p2} &\dots  & a_{pp} & a_{p,p+1} \\
\hline
a_{p+1,1} & a_{p+1,2} & \dots & a_{p+1,p} & a_{p+1,p+1}\\
\hline
\end{array},
  $$
we have  $\ds \sum_{j=1}^p a_{1j} = \sum_{j=1}^p a_{p+1,j}$ and $\ds \sum_{j=1}^p a_{j1} = \sum_{j=1}^p a_{j,p+1}$. Also, we may define variants $\theta _{row}$ and $\theta _{col}$ of $\theta$ by
$$
[\theta _{row} (R)]_{i,j}=R_{\bar i ,j} \qquad \text{and}\qquad [\theta _{col} (R)]_{i,j}=R_{i,\bar j},
   $$
where $\bar i$ and $\bar j$ are as in (\ref{e:theta}).

We first show that $\theta _{row} (R)$ possesses the $p\times p$ property. We may view obtaining $\theta _{row} (R)$ from $R$ by swapping one pair of rows at a time. Let $r$ be a row of $R$ lying in the band $R_{i,0}, R_{i,1}, \dots , R_{i,p^2-1}$ of subsquares. According to the definition of $\theta _{row}$, we swap $r$ with a row $\bar r$ in $R$ that lies in the same relative position
in the band of subsquares $R_{\bar i,0}, R_{\bar i, 1},\dots ,R_{\bar i, p^2-1}$. Therefore $r$ is being swapped with a row $\bar r$ that lies $|i-\bar i|(n/p^2)$ units distant from $r$. Because $\frac{n}{p^2}$ is a multiple of $p$, the characterization of the $p\times p$ property given at the beginning of this proof indicates that the $p\times p$ property remains intact after this row swap. It follows that $\theta_{row}(R)$ possesses the $p\times p$ property. A similar argument shows that $\theta_{col}(R)$ possesses the $p\times p$ property, and a combination of these two  results gives that $\theta (R)
=\theta _{col}(\theta _{row}(R))$ possesses the $p\times p$ property.
\end{proof}

\begin{prop}\label{p:pcolprop}
Let $n$ be triply divisible by a prime $p$ and let $R$ be a type-$p$ most-perfect square of order $n$. Then $\theta(R)$ has the $1/p$ row and column properties.
\end{prop}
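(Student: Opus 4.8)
The plan is to unwind the block permutation $\theta_{row}$ so that the $1/p$ column property for $\theta(R)$ becomes a statement purely about partial column sums of $R$, and then to extract that statement from the complementary and $p\times p$ properties. Write $r_{a,b}$ for the entry of $R$ in row $a$ and column $b$, with all indices read modulo $n$, and set $\sigma=\frac{p(n^2-1)}{2}$, so the complementary sum is $\sigma$ and the $p\times p$ sum is $p\sigma$. Since $\theta_{col}$ only permutes (blocks of) columns, it carries each column of $\theta(R)$ to a column of $R$; hence a column of $\theta(R)$ occupies a single fixed column $b$ of $R$, and only the row positions are shuffled, by $\theta_{row}$. Tracking the digit-swap $i=\ell p+m\mapsto\bar i=mp+\ell$ at the block level, one finds that the block-rows landing in the $t$-th natural part of $\theta(R)$ (for $t=0,\dots,p-1$) are exactly those with block-index $\equiv t\pmod p$. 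Translating to ordinary rows, the $t$-th part of the chosen column of $\theta(R)$ consists of the entries $r_{\,tn/p^2+s+mn/p,\;b}$ with $0\le s<n/p^2$ and $0\le m<p$. Thus the column property reduces to showing $\sum_{s=0}^{n/p^2-1}\sum_{m=0}^{p-1} r_{\,tn/p^2+s+mn/p,\;b}=\frac{n(n^2-1)}{2p}=\frac{n}{p^2}\,\sigma$; the row property is identical after interchanging the roles of $\theta_{row}$ and $\theta_{col}$.

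The tempting shortcut---hoping that each vertical $p$-tuple $\sum_{m}r_{a+mn/p,\,b}$ already equals the complementary value $\sigma$---fails: in Franklin's order-$8$ square these spaced pairs alternate between $7$ and $119$, not the constant $63$. The correct invariant is a \emph{thick} spaced strip. Writing $V(a,b)=\sum_{i=0}^{p-1} r_{a+i,\,b}$ for the height-$p$ consecutive column strip, I will prove $W(a,b):=\sum_{m=0}^{p-1} V(a+mn/p,\,b)=p\sigma$ for every $a,b$. Granting this, group the index $s$ above as $s=jp+i$ with $0\le j<n/p^3$ and $0\le i<p$ (legitimate because $p^3\mid n$); the $t$-th part sum becomes $\sum_{j} W(tn/p^2+jp,\,b)=\frac{n}{p^3}\cdot p\sigma=\frac{n}{p^2}\sigma$, exactly the required value.

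To establish $W(a,b)=p\sigma$ I combine two consequences of the hypotheses. First, the $p\times p$ property forces $V(a,\cdot)$ to be periodic with period $p$ in its column index: since every $p\times p$ block sum equals $p\sigma$, the difference $T(a,b+1)-T(a,b)$ of two horizontally adjacent block sums vanishes, which is precisely $V(a,b+p)-V(a,b)=0$. Because $p^3\mid n$, the shift $n/p$ is a multiple of $p$, so $V(a,b+mn/p)=V(a,b)$ for all $m$. Second, the complementary property lets me sum height-$p$ strips along a broken main diagonal: reindexing,
\[
\sum_{m=0}^{p-1}V\!\left(a+\tfrac{mn}{p},\,b+\tfrac{mn}{p}\right)=\sum_{i=0}^{p-1}\left(\sum_{m=0}^{p-1} r_{\,a+i+mn/p,\;b+mn/p}\right),
\]
and each inner sum is a main-diagonal complementary group based at $(a+i,b)$, hence equals $\sigma$; summing over the $p$ values of $i$ gives $p\sigma$. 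Finally, the periodicity from the first ingredient removes the horizontal drift, $V(a+mn/p,\,b+mn/p)=V(a+mn/p,\,b)$, so the left-hand side above is exactly $W(a,b)$, giving $W(a,b)=p\sigma$.

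The main obstacle is precisely this last interplay: neither property alone suffices---the complementary property only controls \emph{diagonal} $p$-tuples, while the $p\times p$ property only controls \emph{consecutive} blocks---and the argument hinges on the fact that a diagonal displacement by $n/p$ is invisible to the period-$p$ strip sums exactly because $p^3\mid n$. This is where the triple divisibility hypothesis is used. The $1/p$ row property is proved by the transpose of this argument: one uses $\theta_{col}$ to see that each $1/p$-part of a row of $\theta(R)$ is a thick spaced \emph{horizontal} strip in a fixed row of $R$, replaces $V$ by the width-$p$ row strip $H(a,b)=\sum_{l=0}^{p-1} r_{a,\,b+l}$ (now periodic in its row index by the $p\times p$ property), and sums horizontal strips along a broken main diagonal to obtain $p\sigma$ as before.
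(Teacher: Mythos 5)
Your proof is correct and follows essentially the same route as the paper's: pull a $1/p$-part of a column of $\theta(R)$ back to $p$ vertically spaced segments of a single column of $R$, use the $p\times p$ property to make vertical $p$-strip sums invariant under horizontal shifts by multiples of $p$ (so the segments may be slid onto the broken main diagonal), and then apply the complementary property. The only difference is granularity---you work with height-$p$ strips $V(a,b)$ while the paper works with the full $\tfrac{n}{p^2}$-entry block-column sums $\sigma_{i,j}$---which does not change the argument.
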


\begin{proof}
It suffices  to  show that $\theta _{row}(R)$ has the $1/p$ column property.
First we establish some notation: Fix $k\in \{0,\dots,\frac{n}{p^2}-1\}$ and let
$\sigma_{i,j}$ denote the sum of the entries in the $k$-th column of $R_{i,j}$. This sum has $n/p^2$ terms, a fact that will be important later in the proof. Similarly let
$\tilde \sigma_{i,j}$ denote the sum of the entries in the $k$-th column of $[\theta_{row}(R)]_{i,j}$. Recall throughout that
$i,j\in \{0,1,\dots, p^2-1\}$.

 Observe that $\tilde \sigma _{0,j}+\cdots + \tilde \sigma_{p-1,j}$ is the sum of the first $n/p$ entries of the
 $j\cdot \frac{n}{p^2} +k$ column of $\theta _{row}(R)$. (We could address another collection of $n/p$ entries in this same column by replacing $\tilde \sigma _{0,j}$ with $\tilde \sigma _{i+0,j}$, etc., but this clutters the indices so we consider the top $n/p$ entries only.) Applying Equation (\ref{e:theta}), the $p\times p$ property of $R$ (actually the characterization given at the beginning of the proof of Proposition \ref{p:realpbyp}), and the complementary property of $R$ in succession, we obtain
 \begin{align*}
 \tilde \sigma _{0,j}+\tilde \sigma _{1,j}+\cdots +\tilde \sigma _{p-1,j}
 &=  \sigma _{0,j}+\sigma _{p,j}+\sigma _{2p,j}+\cdots + \sigma _{(p-1)p,j}\\
 &=  \sigma _{0,j}+\sigma _{p,j+p}+\sigma _{2p,j+2p}+\cdots + \sigma _{(p-1)p,j+(p-1)p}\\
 &= \frac{n}{p^2}\cdot \frac{p(n^2-1)}{2}=\frac{n(n^2-1)}{2p},
 \end{align*}
as desired. The use of the complementary property to obtain the last line of the displayed equation requires a bit more explanation: Gather the first terms of each sum $\sigma _{\ell p,j+\ell p}$. These add to $\frac{p(n^2-1)}{2}$ by the complementary property, as does the collection of second terms, etc. Since each $\sigma _{\ell p,j+\ell p}$ has $\frac{n}{p^2}$ terms, we obtain $\frac{p(n^2-1)}{2}$ exactly $\frac{n}{p^2}$ times.
\end{proof}

Next we go about showing that if $R$ is a type-$p$ most-perfect square of order $n$, then $\theta (R)$ is pandiagonal. We begin with a pair of lemmas.
\begin{lem}\label{l:diagsum}
Let $m,n\in \N$ and consider a nonnegative integer array $A$ of size  $(mp+1)\times (np+1)$ with
$$
A= \begin{array}{c|ccc|c}
a & \ & v & \ & b \\
\hline
\ & \ & \ & \ & \ \\
u & \ & D &\ & w\\
\ & \ & \ & \ & \ \\
\hline
c & \ & z & \ & d
\end{array}.
   $$
 Here $a,b,c,d\in \Z$, $u,w$ are lists of length $mp-1$, $v,z$ are lists of length $np-1$, and $D$ is an $(mp-1) \times (np-1)$ array. If $A$ possesses the $p\times p$ property then $a+ d=c+ b$.
\end{lem}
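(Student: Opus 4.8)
My plan is to reduce the global corner identity $a+d=c+b$ to a purely local statement about $(p+1)\times(p+1)$ windows and then telescope. I index the entries of $A$ as $A[i][j]$ with $0\le i\le mp$ and $0\le j\le np$, so that $a=A[0][0]$, $b=A[0][np]$, $c=A[mp][0]$, and $d=A[mp][np]$. For a top-left position $(i,j)$ I write $S(i,j)$ for the sum of the $p\times p$ subsquare occupying rows $i,\dots,i+p-1$ and columns $j,\dots,j+p-1$; the $p\times p$ property says precisely that $S(i,j)$ is independent of $(i,j)$ over all positions for which the subsquare fits inside $A$. Note that neither nonnegativity of the entries nor the specific magic value plays any role: only equality of the various $p\times p$ block sums is used.

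The key local step is the claim that, for every $(p+1)\times(p+1)$ window inside $A$ with top-left corner $(i,j)$, the four corner entries of that window satisfy
$$
A[i][j]-A[i][j+p]-A[i+p][j]+A[i+p][j+p]=0.
$$
To prove this I would set $g(r)=A[r][j]-A[r][j+p]$ and compare the four $p\times p$ subsquares contained in the window. Equating the two subsquares occupying the top $p$ rows of the window (shifted by one column) makes the shared columns cancel and yields $\sum_{r=i}^{i+p-1}g(r)=0$; equating the two occupying the bottom $p$ rows yields $\sum_{r=i+1}^{i+p}g(r)=0$. Subtracting these telescopes to $g(i)-g(i+p)=0$, which is exactly the displayed identity.

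With the local identity in hand, the global statement follows from a double telescoping sum. Abbreviating $B[k][l]=A[kp][lp]$, the local claim applied at $(i,j)=(kp,lp)$ reads $B[k][l]-B[k][l+1]-B[k+1][l]+B[k+1][l+1]=0$ for all $0\le k\le m-1$ and $0\le l\le n-1$, since each such window fits inside $A$ (its lower-right corner sits at row $(k+1)p\le mp$ and column $(l+1)p\le np$). Summing this vanishing mixed difference over the grid, first in $l$ and then in $k$, collapses to
$$
B[0][0]-B[0][n]-B[m][0]+B[m][n]=a-b-c+d,
$$
so the total is $0$ and hence $a+d=c+b$. (The degenerate cases $m=0$ or $n=0$ are handled automatically, as the grid sum is then empty and the collapsed expression vanishes identically.)

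I expect the main obstacle to be the local identity, specifically arranging the cancellation so that only the four corner entries of the window survive. A small subtlety I would flag there is that it is cleanest to use the raw $p\times p$ property—equality of neighbouring $p\times p$ block sums—rather than the first-$p$ row/column characterization from the proof of Proposition \ref{p:realpbyp}: the four subsquares needed all lie inside a single $(p+1)\times(p+1)$ window and so always fit, whereas differencing two horizontally shifted copies of that characterization could run off the right edge of $A$ near the boundary. Once the local identity is established, the remaining double telescoping is entirely routine.
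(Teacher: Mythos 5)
Your proof is correct, but it takes a genuinely different route from the paper's. The paper argues in one global step: the four overlapping $mp\times np$ corner subarrays of $A$ (with entry-sets $\{a,u,v,D\}$, $\{b,v,w,D\}$, $\{c,u,z,D\}$, $\{d,z,w,D\}$) each tile into $mn$ disjoint $p\times p$ blocks and hence all have equal sums; adding the first to the fourth and the second to the third and cancelling the shared lists $u,v,z,w$ and the two copies of $D$ gives $a+d=c+b$ immediately. You instead prove only the $m=n=1$ instance --- your local identity that the alternating corner sum of any $(p+1)\times(p+1)$ window vanishes, obtained by differencing horizontally adjacent $p\times p$ blocks in the top and in the bottom $p$ rows of the window --- and then bootstrap to general $m,n$ by a double telescoping sum over the grid of windows anchored at $(kp,lp)$; the fit conditions you check are right, and the telescoping does collapse to $a-b-c+d=0$. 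Both arguments use nothing beyond equality of $p\times p$ block sums. The paper's version is shorter and skips the telescoping bookkeeping (at the cost of the implicit tiling observation for $mp\times np$ blocks); yours isolates a reusable local corner identity, only ever invokes the $p\times p$ property for literal $p\times p$ blocks, and makes the boundary conditions explicit. Your closing caution about preferring raw block-sum equality to the first-row/first-column characterization from Proposition \ref{p:realpbyp} is sensible but moot here, since the paper's proof of this lemma also works directly with block sums.
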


\begin{proof}
By the $p\times p$ property
$$
a+ u+ v+ D =b+ v+ w + D=c+ u+ z+ D =d+ z+ w+ D,
  $$
where the additions indicate the total sums of symbols in each type of list. It follows that
$$
(a+ u+ v+ D) + (d+ z+ w+ D)=(b+ v+ w + D)+ (c+ u+ z+ D ),
  $$
and cancellation gives the result.
\end{proof}

\begin{lem}\label{l:transversalsum}
Let $m\in \Z^+$ and $A=(a_{i,j})$ be an $m\times m$ array such that if $\begin{pmatrix} a_{i_1,j_1} & a_{i_1,j_2}\\
a_{i_2,j_1} & a_{i_2,j_2}\end{pmatrix}$ is a $2\times 2$ subarray of $A$, then $a_{i_1,j_1}+a_{i_2,j_2}=a_{i_1,j_2}+a_{i_2,j_1}$.  Then all transversals of $A$ have the same sum.
\end{lem}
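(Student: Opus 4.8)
The plan is to recognize the hypothesis as precisely the statement that $A$ separates additively into a row-part plus a column-part, and then to observe that every transversal sum telescopes to a quantity that no longer remembers which transversal was chosen. Recall that a transversal of $A$ is a choice of one entry from each row and each column, so it is recorded by a permutation $\pi$ of $\{1,\dots ,m\}$ via the entries $a_{1,\pi(1)},\dots ,a_{m,\pi(m)}$; the claim is that $\sum_{i} a_{i,\pi(i)}$ is independent of $\pi$.

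First I would establish the decomposition $a_{i,j}=r_i+c_j$, where $r_i=a_{i,1}-a_{1,1}$ and $c_j=a_{1,j}$. This is immediate from the hypothesis applied to rows $1,i$ and columns $1,j$: the $2\times 2$ condition reads $a_{1,1}+a_{i,j}=a_{1,j}+a_{i,1}$, which rearranges to
$$
a_{i,j}=(a_{i,1}-a_{1,1})+a_{1,j}=r_i+c_j.
$$
With this in hand, for any permutation $\pi$ I would compute
$$
\sum_{i=1}^m a_{i,\pi(i)}=\sum_{i=1}^m r_i+\sum_{i=1}^m c_{\pi(i)}=\sum_{i=1}^m r_i+\sum_{j=1}^m c_j,
$$
where the final equality uses only that $\pi$ is a bijection, so that $c_{\pi(1)},\dots ,c_{\pi(m)}$ is a reordering of $c_1,\dots ,c_m$. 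The right-hand side depends on neither the permutation nor the transversal, which is exactly what is asserted.

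The main point to get right is the recognition step: the pairwise $2\times 2$ relations, which a priori constrain only four entries at a time, in fact globally force the separation $a_{i,j}=r_i+c_j$. Once that is seen there is no genuine obstacle. I would note in passing a purely combinatorial alternative that bypasses the decomposition entirely: since the symmetric group is generated by transpositions, any two transversals are linked by a finite sequence of single swaps, and each such swap replaces the pair $a_{i_1,\pi(i_1)},a_{i_2,\pi(i_2)}$ by $a_{i_1,\pi(i_2)},a_{i_2,\pi(i_1)}$, whose total is preserved verbatim by the hypothesis. The only care required there is to confirm that each intermediate configuration is still a transversal, which holds automatically because transpositions send permutations to permutations; I expect to present the decomposition argument as the cleaner of the two.
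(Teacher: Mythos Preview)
Your proof is correct. The decomposition $a_{i,j}=r_i+c_j$ is valid exactly by the application of the hypothesis to rows $1,i$ and columns $1,j$, and once it is in hand the transversal-sum computation is immediate.

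Your primary argument is genuinely different from the paper's. The paper does not pass through any structural decomposition of $A$; instead it argues directly that an arbitrary transversal $T$ has the same sum as the main diagonal by building an explicit chain $T=T_0,T_1,\dots ,T_m$ of transversals, where $T_k$ agrees with the main diagonal in positions $1,\dots ,k$ and each step swaps two entries using the $2\times 2$ relation. This is essentially a concrete instance of your parenthetical ``transposition'' alternative, specialized to the particular sequence of transpositions that marches toward the identity permutation one coordinate at a time. What your decomposition buys is a one-line computation and the additional structural insight that the hypothesis is \emph{equivalent} to $A$ being a sum of a row vector and a column vector; what the paper's approach buys is that it never leaves the language of transversals and never needs to name auxiliary quantities $r_i,c_j$, which keeps it self-contained for a reader focused on the combinatorics.
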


\begin{proof}
Let
$T=\{a_{1,j_1},a_{2,j_2},\dots ,a_{m,j_m}\}$ be a transversal for $A$. We show that the sum of the elements of $T$ equals the sum of the main diagonal elements of $A$. This is done by constructing a chain of transversals, culminating in the diagonal transversal, each of which has the same sum. We form a new transversal $T_1$ from $T$ as follows: if $a_{1,j_1}=a_{1,1}$, then $T_1=T$. If $a_{1,j_1}\ne a_{1,1}$, then, because $T$ is a transversal, there exists $1<k\leq m$ with $j_k=1$. Using the the fact that $j_k=1$ and the array property in the hypothesis, we have  that
$$
a_{k,j_1}+a_{1,1}=a_{k,j_1}+a_{1,j_k}=a_{1,j_1}+ a_{k,j_k}.
  $$
  So if we declare $T_1$ to be the set we obtain from $T$ by replacing $a_{1,j_1}$ and $a_{k,j_k}$ by $a_{1,1}$ and $a_{k,j_1}$, then $T_1$ and $T$ have the same sum, and, importantly, $a_{1,1}\in T_1$. Furthermore, $T_1$ is a transversal of $A$ because all rows and columns of $A$ are still accounted for in $T_1$.

  Observe  that if we eliminate the first row and column from $A$ and remove $a_{1,1}$ from $T_1$, then the remaining elements of $T_1$ form a transversal of the new array, and we can repeat  the process above to obtain a transversal $T_2=\{a_{1,1},a_{2,2},\dots ,a_{m,j_m}\}$ of $A$ that has the same sum as $T_1$, with $a_{1,1}$ and $a_{2,2}$ in $T_2$. Continuing in this fashion, we see that the sum of  $T$ is equal to the sum of $T_m$, which is the main diagonal transversal of $A$.
\end{proof}

\begin{prop}\label{p:pangeneral}
Let $p$ be prime and $n$ triply divisible by $p$. If $R$ is a type-$p$ most-perfect square of order $n$ then $\theta (R)$ is pandiagonal.
\end{prop}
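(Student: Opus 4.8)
The plan is to show that every broken diagonal of $\theta(R)$ (in both directions) sums to the magic number $M=\frac{n(n^2-1)}{2}$, by reducing this to the known fact that $R$ is pandiagonal. The two preceding lemmas are exactly the tools for this: Lemma \ref{l:diagsum} converts the $p\times p$ property into an additive corner relation, and Lemma \ref{l:transversalsum} turns that relation into equality of transversal sums. Throughout I use that $\theta$ merely permutes the $p^2$ bands of rows and the $p^2$ bands of columns while fixing the position of each entry within its block.

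First I would introduce residue-class subgrids. Since $n$ is triply divisible by $p$, the block size $n/p^2$ is a multiple of $p$, so a full index $a\cdot(n/p^2)+s$ (band $a$, within-block offset $s$) is congruent to $s$ modulo $p$. For fixed residues $\rho,\kappa\in\{0,\dots,p-1\}$ let $G_{\rho,\kappa}$ be the $\frac{n}{p}\times\frac{n}{p}$ subarray of $\theta(R)$ cut out by the rows $\equiv\rho$ and the columns $\equiv\kappa$. Any two of its rows (resp.\ columns) differ by a multiple of $p$, so any $2\times2$ subarray of $G_{\rho,\kappa}$ occupies the four corners of a window of $\theta(R)$ whose numbers of rows and columns are each one more than a multiple of $p$. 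Because $\theta(R)$ has the $p\times p$ property by Proposition \ref{p:realpbyp}, Lemma \ref{l:diagsum} yields the additive relation at these corners, so $G_{\rho,\kappa}$ satisfies the hypothesis of Lemma \ref{l:transversalsum}, and all of its transversals share a common sum $S(\rho,\kappa)$.

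Next I would decompose a broken diagonal. A slope-$+1$ broken diagonal of $\theta(R)$ with offset $c$ consists of the entries in positions $(i,i+c)$; restricting to the rows with $i\equiv\rho$ selects columns $\equiv\rho+c\pmod p$ and, as $i$ runs through its residue class, hits each such column exactly once. Thus this restriction is a transversal of $G_{\rho,\rho+\gamma}$ with $\gamma\equiv c\pmod p$, and the full diagonal sum is $\sum_{\rho=0}^{p-1}S(\rho,\rho+\gamma)$, depending only on $\gamma$. The identical bookkeeping handles slope-$-1$ diagonals (the matched column class is $\gamma-\rho$) and applies verbatim to $R$ in place of $\theta(R)$, where the analogous subgrids $G^R_{\rho,\kappa}$ (which again have the $p\times p$ property, $R$ being most-perfect) carry common transversal sums $S^R(\rho,\kappa)$.

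Finally I would match the two families of transversal sums. Because $\theta$ fixes within-block offsets and $n/p^2\equiv0\pmod p$, it preserves residue classes modulo $p$ and carries $G^R_{\rho,\kappa}$ onto $G_{\rho,\kappa}$ by a permutation of rows together with a permutation of columns. Row and column permutations send transversals to transversals bijectively and preserve their sums, and both subgrids have constant transversal sum, so $S(\rho,\kappa)=S^R(\rho,\kappa)$ for all $\rho,\kappa$. Consequently each broken diagonal of $\theta(R)$ has the same sum as the broken diagonal of $R$ of equal direction and offset, namely $M$ because $R$ is pandiagonal. (Rows and columns of $\theta(R)$ are whole rows and columns of $R$ rearranged, hence also sum to $M$, and the two main diagonals are the $\gamma=0$ cases above.) Therefore $\theta(R)$ is pandiagonal. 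The step I expect to be most delicate is the identification $S(\rho,\kappa)=S^R(\rho,\kappa)$: it is what transfers pandiagonality from $R$ to $\theta(R)$, and it rests precisely on $\theta$ acting within each residue-class subgrid as a pure row-and-column permutation, which in turn uses the divisibility of $n/p^2$ by $p$.
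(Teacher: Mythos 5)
Your proof is correct, but it reaches the conclusion by a genuinely different decomposition than the paper's. The paper pulls a broken diagonal of $\theta(R)$ back into $R$ and splits it into $n/p^2$ groups of $p^2$ entries, each group occupying the same relative position inside the blocks $R_{i,j}$ and hence forming a transversal of a $p^2\times p^2$ ``constant offset'' subarray $B$ of $R$; Lemma \ref{l:transversalsum} collapses that transversal onto the main diagonal of $B$, whose entries sit on a broken main diagonal of $R$ spaced $n/p^2$ apart, and the complementary property then evaluates the sum outright. You instead split the diagonal of $\theta(R)$ itself into $p$ residue-class pieces, each a transversal of an $\frac{n}{p}\times\frac{n}{p}$ subgrid $G_{\rho,\kappa}$, apply Lemmas \ref{l:diagsum} and \ref{l:transversalsum} on both sides of $\theta$, and transfer the constant transversal sum across $\theta$, which acts on each subgrid as a pure row-and-column permutation precisely because $p\mid n/p^2$ (the same divisibility the paper uses, in its case to justify applying Lemma \ref{l:diagsum} to $B$). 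The two arguments consume disjoint halves of the most-perfect hypothesis: the paper's uses the complementary property and never the pandiagonality of $R$, while yours uses the pandiagonality of $R$ and never the complementary property. Your route in fact proves something slightly more general --- $\theta$ preserves every broken-diagonal sum of any order-$n$ square having the $p\times p$ property --- whereas the paper's computes the common value $\frac{n(n^2-1)}{2}$ directly without needing $R$'s diagonals to have it already. The step you flagged as delicate, $S(\rho,\kappa)=S^R(\rho,\kappa)$, is sound: both subgrids have constant transversal sums and row/column permutations carry transversals to transversals of equal sum.
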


\begin{proof}
Let $d_0,\dots ,d_{n-1}$ denote the elements of a broken diagonal in $\theta (R)$ with
$d_j$ lying in the $j$-th column of $\theta (R)$. Let $k\in \{0,1,\dots ,\frac{n}{p^2}-1\}$ and put
$$
a_i= d_{i\cdot \frac{n}{p^2}+k} \quad (0\leq i\leq p^2-1).
  $$
We claim that $a_0+a_1+\cdots +a_{p^2-1}=\frac{p^2(n^2-1)}{2}$. If this is true then
$$
\sum_{j=0}^{n-1}d_j=\frac{n}{p^2}\cdot \frac{p^2(n^2-1)}{2} =\frac{n(n^2-1)}{2},
   $$
as desired.

   We set about proving the claim. Due to their construction, all of the $a_k$'s lie in the same (relative) location within an $[\theta (R)]_{i,j}$. Because the mapping $R\mapsto \theta (R)$ is of order two and merely permutes the $R_{i,j}$'s without altering the relative locations of entries  within $R_{i,j}$'s (see Equation (\ref{e:theta})), we also know that if $B=(b_{i,j})$ is the $p^2\times p^2$ subarray of $R$ consisting of {\em all} entries lying in this same relative location within some $R_{i,j}$, then $\{a_0,a_1,\dots ,a_{p^2-1}\}$ is a transversal of $B$. Because $R$ has the $p\times p$ property and $n$ is triply divisible by $p$, we may apply Lemma \ref{l:diagsum} to the various $2\times 2$ subarrays of $B$, and so the hypotheses of Lemma \ref{l:transversalsum} are satisfied for $B$. Therefore $a_0+a_1+\cdots +a_{p^2-1}$ is equal to the sum
   $b_{0,0}+b_{1,1}+\cdots b_{p^2-1,p^2-1}$ of the diagonal transversal of $B$.

   Observe that adjacent terms of the sum $b_{0,0}+b_{1,1}+\cdots b_{p^2-1,p^2-1}$ are actually $\frac{n}{p^2}$ units apart on the main diagonal of $R$. Therefore if we rewrite this sum as
   \begin{align*}
   b_{0,0}+b_{1,1}+\cdots +b_{p^2-1,p^2-1}
   &= (b_{0,0}+b_{p,p}+b_{2p,2p}+\cdots +b_{(p-1)p,(p-1)p})\\
    &+(b_{1,1}+b_{1+p,1+p}+ b_{1+2p,1+2p}+\cdots +b_{1+(p-1)p,1+(p-1)p})\\
   & +\cdots +(b_{p-1,p-1}+b_{2p-1,2p-1}+\cdots +b_{p^2 -1,p^2-1})
     \end{align*}
     then within each parenthetical summand there are $p$ terms and adjacent terms are $n/p$ units apart in $R$. Because $R$ possesses the complementary property, we then know that each parenthetical summand adds to $\frac{p(n^2-1)}{2}$. Because there are $p$ parenthetical summands, we may then conclude that
     $$
     a_0+a_1+\cdots +a_{p^2-1}=b_{0,0}+b_{1,1}+\cdots b_{p^2-1,p^2-1}=p\cdot \frac{p(n^2-1)}{2}=\frac{p^2(n^2-1)}{2}.
     $$
   Therefore the claim is proved.
\end{proof}

We may summarize the previous results as follows:

\begin{prop}\label{p:culminate}
Let $n$ be triply divisible by $p$ and suppose $R$ is a type-$p$ most-perfect square of order $n$. Then $\theta (R)$ is semi-magic, possesses the $p\times p$ property, possesses the $1/p$ row and column properties, and is pandiagonal.
\end{prop}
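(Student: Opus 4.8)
The plan is to observe that Proposition \ref{p:culminate} is almost entirely a repackaging of the three preceding propositions, so the proof reduces to citing them in turn and then filling in the one conclusion---semi-magicality---that has not yet been addressed directly. Throughout, the standing hypotheses are exactly those shared by Propositions \ref{p:realpbyp}, \ref{p:pcolprop}, and \ref{p:pangeneral}, namely that $n$ is triply divisible by $p$ and that $R$ is a type-$p$ most-perfect square of order $n$, so no new assumptions are needed and each cited result applies verbatim.

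First I would dispose of the three properties already in hand. By definition a type-$p$ most-perfect square $R$ possesses the $p\times p$ property, so Proposition \ref{p:realpbyp} (whose hypothesis that $n$ be triply divisible by $p$ is met) yields that $\theta(R)$ has the $p\times p$ property. The $1/p$ row and column properties for $\theta(R)$ are precisely the content of Proposition \ref{p:pcolprop}, and pandiagonality is precisely the content of Proposition \ref{p:pangeneral}. Thus three of the four asserted properties follow immediately.

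It remains to show that $\theta(R)$ is semi-magic, i.e.\ that every full row and every full column sums to the magic sum $\ds \frac{n(n^2-1)}{2}$. I would deduce this directly from the $1/p$ properties just invoked rather than from pandiagonality. Splitting a column of $\theta(R)$ naturally into its $p$ consecutive parts, each part sums to $\ds \frac{n(n^2-1)}{2p}$ by the $1/p$ column property; summing these $p$ equal contributions gives $\ds p\cdot \frac{n(n^2-1)}{2p}=\frac{n(n^2-1)}{2}$, the magic sum. The identical argument applied to the $p$ parts of a row, using the $1/p$ row property, shows that every row sums to the magic sum as well.

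There is no genuine obstacle here, since the proposition is explicitly a summary; the only points requiring care are bookkeeping ones. I would make sure to note that semi-magicality is a strictly weaker statement than pandiagonality and is most cleanly obtained by aggregating the $1/p$ parts (rather than attempting to recover row and column sums from the broken-diagonal conclusion of Proposition \ref{p:pangeneral}), and I would verify once that the triple-divisibility hypothesis demanded by Proposition \ref{p:realpbyp} is indeed the hypothesis in force, so that all four invoked facts are available simultaneously.
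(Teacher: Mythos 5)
Your proposal is correct and matches the paper's treatment: the paper states Proposition \ref{p:culminate} without a separate proof, explicitly as a summary of Propositions \ref{p:realpbyp}, \ref{p:pcolprop}, and \ref{p:pangeneral}, which is exactly how you assemble three of the four conclusions. Your derivation of semi-magicality by summing the $p$ equal parts guaranteed by the $1/p$ row and column properties correctly fills the one claim the paper leaves implicit (one could equally note that $\theta$ is a row-and-column permutation of the semi-magic square $R$), so the argument is sound.
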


\section{Defining Type-$p$ Franklin Squares: Bent Diagonals}\label{s:bent}
In the introduction we established precise characteristics of type-$p$ Franklin squares, with the exception of the bent diagonals, which we address presently. We will refer to the type-$p$ analogs of bent diagonals as {\bf Franklin patterns}.
In the interest of simplicity we describe Franklin patterns first in the special case $n=p^3$ before addressing the general case $n=kp^3$ (Section \ref{s:greater}). These squares, except for the smallest few primes, are large, so we will be using the special cases $p=2,3,5$ to illustrate several key points. Also, we will first focus our attention on the construction of a particular Franklin pattern, called a {\bf Franklin-up pattern}, an example of which is given in Section \ref{ss:dubya}. These patterns specialize to classical Franklin ``V" patterns when $p=2$.

Consider a collection of $n/p =p^2$ consecutive rows of $S$, which we intend to serve as a {\bf frame} for a Franklin-up pattern $W$. This frame can be partitioned into a $p\times p^2$ array $T$ whose entries are subsquares $T_{i,j}$, each of size $p\times p$, where $0\leq i\leq p$ and $0\leq j\leq p^2-1$. Square $T_{i,j}$, which we occasionally refer to as a {\bf block}, lies in the $i$-row and $j$-column of $T$. We describe which subsquares of $T$ have non-trivial intersection with $W$. The array $T$ can be partitioned into $p\times p$ subarrays $B_0,\dots ,B_{p-1}$ (called {\bf bands}), each containing $p$ columns of $T$, where $B_0$ contains the leftmost $p$ columns of $T$, $B_1$ contains the next $p$ columns of $T$, and so on. For $0\leq j < \frac{p-1}{2}$, the Franklin-up pattern $W$ intersects each entry of the main diagonal of $B_j$ when $j$ is even, and each entry of the off-diagonal of $B_j$ when $j$ is odd. The locations of these  intersections reflect across the central band $B_{(p-1)/2}$, so that $W$ intersects each entry of the off-diagonal of $B_{(p-1)-j}$ when $j$ is even, and each entry of the main diagonal of $B_{(p-1)-j}$ when $j$ is odd. When $p$ is odd there will be a central band $B_{(p-1)/2}$, in which intersection with $W$ will rise to a central peak when $(p-1)/2$ is odd and fall to central valley when $(p-1)/2$ is even. These intersections of $W$ with $T$ are indicated below in cases $p=2,3,5$; double vertical lines separate bands.
$$
\begin{array}{|c|c||c|c|}
\hline
\ast & \ & \ & \ast \\
\hline
\ & \ast  & \ast & \ \\
\hline
\end{array}
\quad
\begin{array}{|c|c|c||c|c|c||c|c|c|}
\hline
\ast & \ & \ & \ & \ast & \ & \ & \ & \ast  \\
\hline
\ & \ast & \ & \ast & \ & \ast & \ & \ast & \  \\
\hline
\ & \ & \ast & \ast & \ & \ast  & \ast & \ & \  \\
\hline
\end{array}
   $$
$$
\begin{array}{|c|c|c|c|c||c|c|c|c|c||c|c|c|c|c||c|c|c|c|c||c|c|c|c|c|}
\hline
\ast & \ & \ & \ & \ & \ & \ & \ & \ & \ast & \ast & \ & \ & \ & \ast & \ast & \ & \ & \ & \ & \ & \ & \ & \ & \ast   \\
\hline
\ & \ast & \ & \ & \ & \ & \ & \ & \ast & \ & \ast & \ & \ & \ & \ast & \ & \ast & \ & \ & \ & \ & \ & \ & \ast & \   \\
\hline
\ & \ & \ast & \ & \ & \ & \ & \ast & \ & \ & \ & \ast & \ & \ast & \ & \ & \ & \ast & \ & \ & \ & \ & \ast & \ & \   \\
\hline
\ & \ & \ & \ast & \ & \ & \ast & \ & \ & \ & \ & \ast & \ & \ast & \ & \ & \ & \ & \ast & \ & \ & \ast & \ & \ & \   \\
\hline
\ & \ & \ & \ & \ast & \ast & \ & \ & \ & \ & \ & \ & \ast & \ & \ & \ & \ & \ & \ & \ast & \ast & \ & \ & \ & \   \\
\hline
\end{array}
    $$
In the figure above, we emphasize that each small rectangle represents some $p\times p$ array $T_{i,j}$ in $T$, not an individual entry in $S$.

In case the description above is not sufficiently specific, the Franklin-up pattern we construct in this frame will intersect the following subsquares:
$$
T_{j,2mp+j} \text{ and } T_{j,(p^2-1)-(2mp+j)} \text{ for } 0\leq j\leq p-1 \text{ and } 0\leq m< \frac{p-1}{4},
  $$
  and
$$
T_{j,(2mp-1)-j} \text{ and } T_{j,(p^2-1)-((2mp-1)-j)} \text{ for } 0\leq j\leq p-1 \text{ and } 0< m\leq \frac{p-1}{4}.
    $$
Further, if $p$ is odd, then $W$ will also intersect the following subsquares, depending on the parity of $(p-1)/2$: If $(p-1)/2$ is even then $W$ intersects $T_{p-1,\frac{p^2-1}{2}}$ and
$$
T_{j,\frac{p(p-1)}{2}+\lfloor \frac{j}{2}\rfloor} \text{ and } T_{j,\frac{p(p-1)}{2}+(p-1)-\lfloor \frac{j}{2}\rfloor}
\text{ for } 0\leq j<p-1.
  $$
On the other hand, if $(p-1)/2$  is odd then $W$ intersects $T_{0,\frac{p^2-1}{2}}$ and
$$
T_{j,\frac{p^2-1}{2}-\lceil \frac{j}{2}\rceil} \text{ and } T_{j,\frac{p^2-1}{2}-\lceil\frac{j}{2}\rceil} \text{ for } 0< j \leq p-1.
   $$

We've seen which of the arrays $T_{i,j}$ intersect $W$ non-trivially, and we now need to determine those intersections precisely. For $0\leq j\leq p-1$ with $j\ne (p-1)/2$, we let $B^i_j$ denote the $p\times p$ square in the $i$-th row of $B_j$ that intersects $W$. Further, when $p$ is odd, we let $B^{i,0}_{\frac{p-1}{2}}$ and $B^{i,1}_{\frac{p-1}{2}}$ denote the left and right squares, respectively, in the $i$-th row of $B_{\frac{p-1}{2}}$. These squares will coincide exactly when $i=0$ and $\frac{p-1}{2}$ is odd or when $i=p-1$ and $\frac{p-1}{2}$ is even. (Each $B^i_j$ is a $T_{k,\ell}$ for some $k,\ell$, and while we can make this connection explicitly, it seems unnecessary and perhaps counterproductive.) Below we indicate the positions of the $B^i_j$ in cases $p=2,3,5$:

$$
\begin{array}{|c|c||c|c|}
\hline
B^0_0 & \ & \ & B^0_1 \\
\hline
\ & B^1_0  & B^1_1 & \ \\
\hline
\end{array}
\quad
\begin{array}{|c|c|c||c|c|c||c|c|c|}
\hline
B^0_0 & \ & \ & \ & B^{0,0}_1=B^{0,1}_1 & \ & \ & \ & B^0_2  \\
\hline
\ & B^1_0 & \ & B^{1,0}_1 & \ & B^{1,1}_1 & \ & B^1_2 & \  \\
\hline
\ & \ & B^2_0 & B^{2,0}_1 & \ & B^{2,1}_1  & B^2_2 & \ & \  \\
\hline
\end{array}
   $$
 \scalebox{0.7}{\parbox{\linewidth}{$$
\begin{array}{|c|c|c|c|c||c|c|c|c|c||c|c|c|c|c||c|c|c|c|c||c|c|c|c|c|}
\hline
B^0_0 & \ & \ & \ & \ & \ & \ & \ & \ & B^0_1 & B^{0,0}_2 & \ & \ & \ & B^{0,1}_2 & B^0_3 & \ & \ & \ & \ & \ & \ & \ & \ & B^0_4  \\
\hline
\ & B^1_0 & \ & \ & \ & \ & \ & \ & B^1_1 & \ & B^{1,0}_2 & \ & \ & \ & B^{1,1}_2 & \ & B^1_3 & \ & \ & \ & \ & \ & \ & B^1_4 & \   \\
\hline
\ & \ & B^2_0 & \ & \ & \ & \ & B^2_1 & \ & \ & \ & B^{2,0}_2 & \ & B^{2,1}_2 & \ & \ & \ & B^2_3 & \ & \ & \ & \ & B^2_4 & \ & \   \\
\hline
\ & \ & \ & B^3_0 & \ & \ & B^3_1 & \ & \ & \ & \ & B^{3,0}_2 & \ & B^{3,1}_2 & \ & \ & \ & \ & B^3_3 & \ & \ & B^3_4 & \ & \ & \   \\
\hline
\ & \ & \ & \ & B^4_0 & B^4_1 & \ & \ & \ & \ & \ & \ & B^{4,0}_2=B^{4,1}_2 & \ & \ & \ & \ & \ & \ & B^4_3 & B^4_4 & \ & \ & \ & \   \\
\hline
\end{array}
    $$
    }}

Let $1\leq \alpha , \beta <p$ with $\alpha +\beta =p$, and let $0\leq j< (p-1)/2$. Recall that each $B^i_j$ is a $p\times p$ array. The Franklin-up pattern $W$ will intersect the $B^i_j$ as follows, where in each instance $1\leq i\leq p$.
  \begin{itemize}
  \item If $j$ is even then $B^i_j \cap W$ consists of the first $\alpha$ entries in row $2j$ and the last $\beta$ entries in row $2j+1$ of $B^i_j$.
  \item If $j$ is even then $B^i_{p-1-j}\cap W$ consists of the last $\beta$ entries in row $2j$ and the first $\alpha$ entries in row $2j+1$ of $B^i_{p-1-j}$.
  \item If $j$ is odd then $B^i_j\cap W$ consists of the last $\beta$ entries in row $2j$ and the first $\alpha$ entries in row $2j+1$ of $B^i_j$.
  \item If $j$ is odd then $B^i_{p-1-j}\cap W$ consists of the first $\alpha$ entries of row $2j$ and the last $\beta$ entries of row $2j+1$.
  \end{itemize}
Below is a pictorial representation of these intersections:
\begin{figure}[h]
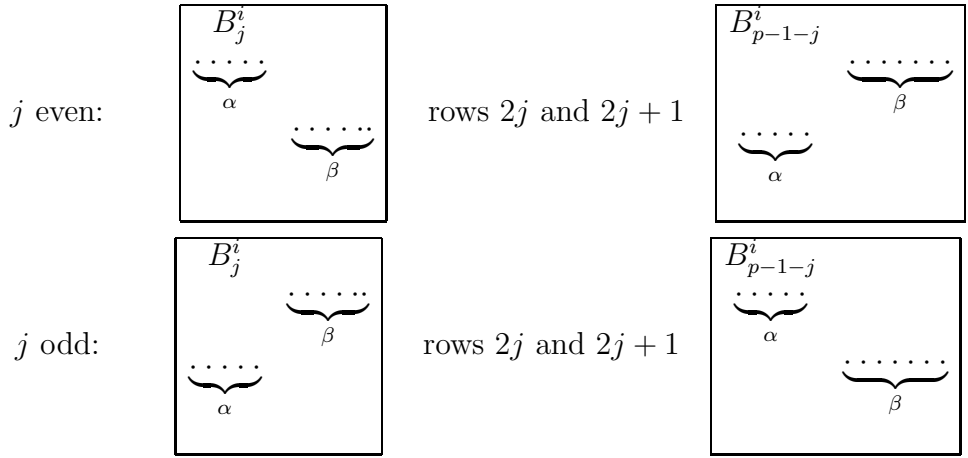

$$
j\text{ even: }\qquad
\begin{array}{|cc|}
 \hline
B_j^i & \ \\
 \underbrace{\cdot \cdot \cdot \cdot \cdot}_{\alpha}  & \ \\
 \ &\underbrace{ \cdot\cdot \cdot \cdot \cdot \cdot }_{\beta} \\
 \ & \ \\
 \hline
 \end{array}
 \quad\text{ rows }2j\text{ and } 2j+1\quad
  \begin{array}{|cc|}
 \hline
 B_{p-1-j}^i & \ \\
 \ & \underbrace{\cdot \cdot \cdot \cdot \cdot \cdot \cdot}_{\beta}   \\
 \underbrace{\cdot \cdot \cdot \cdot \cdot }_{\alpha}  & \ \\
 \ & \ \\
 \hline
 \end{array}
   $$
 $$
j\text{ odd: }\qquad
 \begin{array}{|cc|}
 \hline
 B_j^i & \ \\
  \ &\underbrace{ \cdot\cdot \cdot \cdot \cdot \cdot }_{\beta} \\
 \underbrace{\cdot \cdot \cdot \cdot \cdot}_{\alpha}  & \ \\
 \ & \ \\
 \hline
 \end{array}
 \quad\text{ rows }2j\text{ and } 2j+1\quad
  \begin{array}{|cc|}
 \hline
 B_{p-1-j}^i & \ \\
  \underbrace{\cdot \cdot \cdot \cdot \cdot }_{\alpha}  & \ \\
 \ & \underbrace{\cdot \cdot \cdot \cdot \cdot \cdot \cdot}_{\beta}   \\
 \ & \ \\
 \hline
 \end{array}
   $$
\caption{Intersections of $B_j^i$ and $B_{p-1-j}^i$ with $W$ when $0\leq j <(p-1)/2$.}
    \label{f:blockint}
\end{figure}

It remains to see how, when $p$ is odd, the squares $B^{i,k}_{\frac{p-1}{2}}$ in the central band will intersect $W$:
  \begin{itemize}
  \item If $i$ is even then $B_{\frac{p-1}{2}}^{i,0} \cap W$ consists of the first $\alpha$ entries in the bottom row of $B_{\frac{p-1}{2}}^{i,0}$.
  \item  If $i$ is even then $B_{\frac{p-1}{2}}^{i,1} \cap W$ consists of the last $\beta$ entries in the bottom row of $B_{\frac{p-1}{2}}^{i,1}$.
  \item If $i$ is odd then $B_{\frac{p-1}{2}}^{i,0} \cap W$ consists of the last $\beta$ entries in the bottom row of $B_{\frac{p-1}{2}}^{i,0}$.
  \item  If $i$ is odd then $B_{\frac{p-1}{2}}^{i,1} \cap W$ consists of the first $\alpha$ entries in the bottom row of $B_{\frac{p-1}{2}}^{i,1}$.
  \item In the special case that $B_{\frac{p-1}{2}}^{i,0}=B_{\frac{p-1}{2}}^{i,1}$, their intersection with $W$ consists of the entire bottom row of $B_{\frac{p-1}{2}}^{i,0}$.
  \end{itemize}
Below is a pictorial representation of these intersections:
\begin{figure}[h]
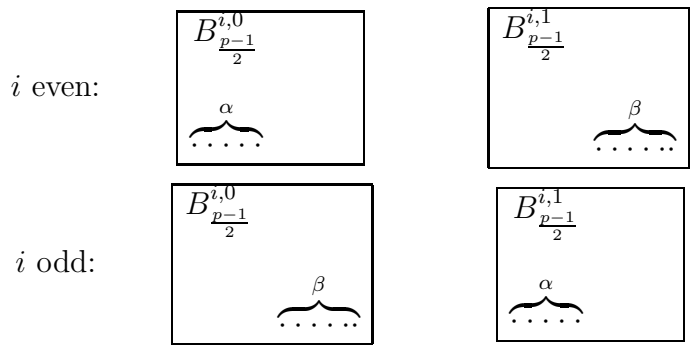

$$
i\text{ even: }\qquad
\begin{array}{|cc|}
 \hline
B_{\frac{p-1}{2}}^{i,0} & \ \\
\   & \  \\
 \overbrace{\cdot \cdot \cdot \cdot \cdot}^{\alpha}  & \qquad \\
 \hline
 \end{array}
 \qquad\qquad
\begin{array}{|cc|}
 \hline
B_{\frac{p-1}{2}}^{i,1} & \ \\
\   & \  \\
 \qquad  &  \overbrace{\cdot \cdot \cdot \cdot \cdot \cdot}^{\beta} \\
 \hline
 \end{array}
   $$
 $$
i\text{ odd: }\qquad
\begin{array}{|cc|}
 \hline
B_{\frac{p-1}{2}}^{i,0} & \ \\
\   & \  \\
 \qquad  &  \overbrace{\cdot \cdot \cdot \cdot \cdot \cdot}^{\beta} \\
 \hline
 \end{array}
\qquad \qquad
\begin{array}{|cc|}
 \hline
B_{\frac{p-1}{2}}^{i,1} & \ \\
\   & \  \\
 \overbrace{\cdot \cdot \cdot \cdot \cdot}^{\alpha}  & \qquad \\
 \hline
 \end{array}
   $$
 \caption{Intersections of $B_{\frac{p-1}{2}}^{i,k}$ with $W$.}
    \label{f:blockint2}
 \end{figure}

 In the case $p=3$, $\alpha=1$, and $\beta=2$, the intersections described above, which characterize a Franklin-up $W$ pattern, are illustrated in the order-$27$ square shown in Section \ref{ss:dubya}.

 We observe that within its frame, a Franklin-up pattern $W$ intersects each column of $S$ exactly once, and each row exactly $p$ times, so $W$ has $n=p^3$ entries. Also, while $W$ does not have vertical midline symmetry when $p>2$, the blocks containing $W$ do possess this symmetry. Finally, we can obtain Franklin-right, Franklin-down, and Franklin-left patterns from a Franklin-up pattern via clockwise rotations of the ambient square $S$ through $90^\circ$, $180^\circ$, and $270^\circ$, respectively. These constitute the entirety of Franklin patterns in $S$, and they specialize to the classical Franklin ``V" patterns when $p=2$. Therefore, we are now able to make the following definition:

\begin{defn}\label{d:Franklin}
We say that a natural square $S$ of order $n=p^3$ is a {\bf Franklin square of type $p$} if it has the $p\times p$ property, the $1/p$-property for both rows and columns, and the numbers in every Franklin pattern in $S$ add to the magic sum $\ds \frac{n(n^2-1)}{2}$.
\end{defn}

The Franklin pattern requirement in Definition \ref{d:Franklin} applies to patterns arising from any partition $\alpha +\beta =p$ with $1\leq \alpha ,\beta <p$. One might reasonably weaken Definition \ref{d:Franklin} by only requiring the existence of a partition $\alpha +\beta$ of $p$ such that all corresponding Franklin patterns have entries adding to the magic sum. Definition \ref{d:Franklin} and its weakened version both specialize to the definition of classical Franklin squares in the case $p=2$.

\section{Construction of Type-$p$ Franklin Squares}\label{s:cube}

Let $p$ be prime  and let $R$ be a type-$p$ most-perfect square of order $p^3$. Such squares exist; a linear construction is given in \cite{jL18}. In this section we show that $S=\theta (R)$ is a pandiagonal type-$p$ Franklin square, where $\theta$ is the involution introduced in Section \ref{s:involution}. Proposition \ref{p:culminate} says $S$ is pandiagonal, has the $1/p$ row and column properties, and has the $p\times p$ property. It remains to show that the Franklin patterns of $S$ (defined in Section \ref{s:bent}) add to the magic sum. A similar verification for orders $p^r$ with $r\geq 3$ is indicated in Section \ref{s:greater}.

\begin{lem}\label{l:moremoresums}
Let $m,n,p\in \N$ with $p\geq 2$, and consider a nonnegative integer array $A$ of size  $(mp+1)\times np$ with
$$
A= \begin{array}{c|ccc|c|c|c}
a & \ & \ & \ & b_1 & \cdots & b_{p-1}\\
\hline
\ & \ & \ & \ & \ & \cdots & \ \\
\ & \ & D &\ & \ & \cdots & \ \\
\ & \ & \ & \ & \ & \cdots & \ \\
\hline
c & \ & \ & \ & d_1 &\cdots & d_{p-1}
\end{array}.
   $$
 Here $a,b_i,c,d_i\in \Z$ for $1\leq i\leq p-1$ and $D$ is an $(mp-1) \times (n-1)p$ array. If $A$ possesses the $p\times p$ property then $\displaystyle a+ \sum_{i=1}^{p-1}b_i=c+ \sum_{i=1}^{p-1}d_i$.
\end{lem}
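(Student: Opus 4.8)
The plan is to reduce the statement to Lemma \ref{l:diagsum} together with one elementary tiling observation. Throughout, let $\Sigma$ denote the common sum of every $p\times p$ subsquare of $A$ formed from consecutive rows and columns; the $p\times p$ property guarantees that this value is well defined, and consequently any rectangular subarray whose height and width are both divisible by $p$ partitions into disjoint such subsquares and so has entry-sum equal to $\Sigma$ times its number of tiles. Write $b_0$ and $d_0$ for the top-row and bottom-row entries lying in column $(n-1)p+1$, that is, the entries immediately to the left of $b_1$ and $d_1$, respectively. These two ``missing'' entries are what make the bookkeeping close up.

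First I would treat the last $p$ columns. The block occupying rows $1,\dots,mp$ in the last $p$ columns has size $mp\times p$, hence tiles into $m$ disjoint $p\times p$ squares and has entry-sum $m\Sigma$; the same holds for the block occupying rows $2,\dots,mp+1$ in the last $p$ columns. Equating these two sums and cancelling the shared rows $2,\dots,mp$ leaves the top-row and bottom-row contributions across these $p$ columns, namely
$$
b_0+\sum_{i=1}^{p-1}b_i \;=\; d_0+\sum_{i=1}^{p-1}d_i .
$$

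Second I would apply Lemma \ref{l:diagsum} to the subarray $A^{*}$ consisting of all rows of $A$ together with columns $1,\dots,(n-1)p+1$. This $A^{*}$ has size $(mp+1)\times\bigl((n-1)p+1\bigr)$, it inherits the $p\times p$ property from $A$, and its four corner entries are precisely $a$, $b_0$, $c$, and $d_0$ in the top-left, top-right, bottom-left, and bottom-right positions. Lemma \ref{l:diagsum} therefore gives $a+d_0=c+b_0$, i.e. $a-c=b_0-d_0$. Rearranging the displayed identity from the first step as $\sum_{i=1}^{p-1}b_i-\sum_{i=1}^{p-1}d_i=d_0-b_0$ and adding it to $a-c=b_0-d_0$, the terms $b_0$ and $d_0$ cancel and we obtain $a+\sum_{i=1}^{p-1}b_i=c+\sum_{i=1}^{p-1}d_i$, as desired.

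I do not anticipate a serious obstacle; the only points demanding care are the correct identification of the auxiliary entries $b_0,d_0$ and the check that the chosen subarray $A^{*}$ has width $(n-1)p+1\equiv 1 \pmod p$, so that Lemma \ref{l:diagsum} genuinely applies to it. Once these indices are pinned down, the tiling step and the appeal to Lemma \ref{l:diagsum} are both routine and the final cancellation is immediate.
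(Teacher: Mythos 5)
Your proof is correct and follows essentially the same route as the paper's: both introduce the auxiliary entries $b_0,d_0$ in column $(n-1)p+1$, obtain $a+d_0=c+b_0$ from Lemma \ref{l:diagsum}, obtain $b_0+\sum_{i=1}^{p-1}b_i=d_0+\sum_{i=1}^{p-1}d_i$ from the $p\times p$ property (your tiling argument just makes this step more explicit), and combine. The only point to tidy is the degenerate case $n=1$, where $b_0=a$ and $d_0=c$ so Lemma \ref{l:diagsum} cannot literally be applied to the width-one array $A^{*}$; the paper dispatches this case separately in one line, and you should too (your first step already gives the conclusion there).
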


\begin{proof}
If $n=1$ this follows immediately from the $p\times p$ property, so we assume $n\geq 2$. Rewrite $A$ as
$$
A= \begin{array}{c|ccc|c|c|c|c}
a & \ & \ & \ & b_0 & b_1 & \cdots & b_{p-1}\\
\hline
\ & \ & \ & \ & \ & \ & \cdots & \ \\
\ & \ & D' &\ &\ & \ & \cdots & \ \\
\ & \ & \ & \ &\ & \ & \cdots & \ \\
\hline
c & \ & \ & \ & d_0 &d_1 &\cdots & d_{p-1}
\end{array}
   $$
where $b_0,d_0\in \Z$ and $D'$ is an array of size $(mp-1)\times ((n-1)p-1)$. By Lemma \ref{l:diagsum} we have $a+d_0=c+b_0$. Also, because $A$ has the $p\times p$ property, we
have $b_0+\cdots +b_{p-1}=d_0+\cdots +d_{p-1}$. Therefore
$$
a+ d_0=c+ b_0 \Longrightarrow a+ (\sum_{i=0}^{p-1}b_i-\sum_{i=1}^{p-1}d_i)=c+ b_0\Longrightarrow a+ \sum_{i=1}^{p-1}b_i=c+ \sum_{i=1}^{p-1}d_i.
  $$
\end{proof}

If $A$ as in the lemma has the $p\times p$ property, then the result of the lemma will continue to hold true if all other instances of $p$ are replaced  by a fixed multiple of $p$. Lemma \ref{l:moremoresums} has a useful generalization:

\begin{lem}\label{l:moremoresums2}
Let $m,n,k,p\in \N$ with $p\geq 2$ and $1\leq k < p$, and consider a nonnegative integer array $A$ of size  $(mp+1)\times np$ with
$$
A= \begin{array}{ccc|ccc|ccc}
a_1& \cdots & a_k & \ & \ & \ & b_{k+1} & \cdots & b_p\\
\hline
\ &\ &\ & \ & \ & \ & \ & \cdots & \ \\
\ &\ &\ & \ & D &\ & \ & \cdots & \ \\
\ &\ &\ & \ & \ & \ & \ & \cdots & \ \\
\hline
c_1 &\cdots & c_k & \ & \ & \ & d_{k+1} &\cdots & d_p
\end{array}.
   $$
 Here all entries are integers and $D$ is an $(mp-1) \times (n-1)p$ array. If $A$ possesses the $p\times p$ property then $\displaystyle \sum_{i=1}^k a_i +\sum_{j=1}^{p-k}b_{k+j}=\sum_{i=1}^k c_i+ \sum_{j=1}^{p-k}d_{k+j}$.
\end{lem}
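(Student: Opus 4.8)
The plan is to treat this as the general-$k$ version of Lemma \ref{l:moremoresums} (which is exactly the case $k=1$) and to mimic that argument, combining one use of the $p\times p$ property with $k$ applications of Lemma \ref{l:diagsum}. First I would dispose of the degenerate case $n=1$: there the array is $(mp+1)\times p$, its top row is precisely $a_1,\dots,a_k,b_{k+1},\dots,b_p$ and its bottom row is precisely $c_1,\dots,c_k,d_{k+1},\dots,d_p$, so the claimed identity is immediate from the $p\times p$ property applied to the single available width-$p$ window. Hence I would then assume $n\ge 2$.

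For the main case the idea is to manufacture the missing corner entries. I would label by $b_1,\dots,b_k$ the top-row entries sitting in the $k$ columns immediately to the left of $b_{k+1}$ (namely columns $np-p,\dots,np-p+k-1$), and likewise label $d_1,\dots,d_k$ the bottom-row entries in those same columns, so that $b_1,\dots,b_p$ and $d_1,\dots,d_p$ now fill out the final width-$p$ window of the array. Applying the $p\times p$ property to rows $0$ and $mp$ over this window yields the window relation $\sum_{j=1}^{p}b_j=\sum_{j=1}^{p}d_j$. For the second ingredient I would fix $i\in\{1,\dots,k\}$ and observe that $a_i,b_i,c_i,d_i$ occupy the four corners of the contiguous sub-array of $A$ spanning every row and the columns $i-1$ through $np-p+i-1$; these two columns are separated by $(n-1)p$, a positive multiple of $p$, so this sub-array has exactly the shape demanded by Lemma \ref{l:diagsum} and inherits the $p\times p$ property, giving the corner relation $a_i+d_i=c_i+b_i$. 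Summing the corner relations over $i$ and using the window relation to eliminate $b_1,\dots,b_k$ and $d_1,\dots,d_k$ then leaves $\sum_{i=1}^{k}a_i+\sum_{j=k+1}^{p}b_j=\sum_{i=1}^{k}c_i+\sum_{j=k+1}^{p}d_j$, which is the assertion after rewriting $\sum_{j=k+1}^{p}b_j=\sum_{j=1}^{p-k}b_{k+j}$ and similarly for $d$.

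I expect the only real care to lie in the bookkeeping that legitimizes Lemma \ref{l:diagsum}: verifying that the horizontal gap between the left corner column $i-1$ and the right corner column $np-p+i-1$ is the $p$-multiple $(n-1)p$ for every $i$ (so the four entries genuinely align as the corners of a sub-array of the stated form), and that a block of consecutive rows and columns inherits the $p\times p$ property. Both points are routine: the gap is independent of $i$, and every $p\times p$ window inside the sub-array is already a $p\times p$ window of $A$. Conceptually, the corner relations say that the top-minus-bottom difference in a column depends only on that column's residue modulo $p$, and the window relation says one full period of these differences sums to zero; the result is simply the telescoping of these two facts. The structure is otherwise identical to the $k=1$ argument, with the single auxiliary corner of Lemma \ref{l:moremoresums} replaced by $k$ of them handled in parallel.
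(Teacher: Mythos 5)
Your argument is correct, but it is organized differently from the paper's. The paper proves the general case by invoking Lemma \ref{l:moremoresums} (the $k=1$ case) $k$ times, once for each $a_j$, each time using the cyclic window of the $p-1$ entries preceding column $j-1$ (so it must introduce wrapped labels $b_{p+1},\dots,b_{p+k}$ and $d_{p+1},\dots,d_{p+k}$); it then adds the $k$ resulting identities and performs a redistribution-and-cancellation step, using the $p\times p$ property again to equate $\sum_{j=2}^{k}(b_j+\cdots+b_{j+p-1})$ with $\sum_{j=2}^{k}(d_j+\cdots+d_{j+p-1})$. You instead bypass Lemma \ref{l:moremoresums} and generalize its proof directly: $k$ parallel applications of Lemma \ref{l:diagsum} give the corner relations $a_i+d_i=c_i+b_i$, and a single window relation $\sum_{j=1}^{p}b_j=\sum_{j=1}^{p}d_j$ (the top and bottom rows being $mp$ rows apart) eliminates the auxiliary entries in one step. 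The two proofs rest on exactly the same two facts --- Lemma \ref{l:diagsum} and the invariance of a width-$p$ row sum under vertical translation by multiples of $p$ --- but your bookkeeping is lighter: one window identity instead of many, no cyclic relabeling past index $p$, and no term-borrowing cancellation. Your side checks (the column gap $np-p+i-1-(i-1)=(n-1)p$ is a positive multiple of $p$ independent of $i$, and the non-wrapping $p\times p$ windows of the sub-array are windows of $A$, which is all the proof of Lemma \ref{l:diagsum} actually uses) are precisely the points that need verifying, and your $n=1$ base case matches the paper's.
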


\begin{proof}
If $n=1$ this follows immediately from the $p\times p$ property, so we assume $n\geq 2$. Let $b_1,\dots ,b_k$ be the entries in $A$ immediately preceding $b_{k+1}$ in the same row and $b_{p+1},\dots ,b_{p+k}$ the entries immediately succeeding $b_{p}$ in the same row. Similarly define $d_1,\dots ,d_k$ and $d_{p+1},\dots, d_{p+k}$. Applying Lemma \ref{l:moremoresums} we have
$$
a_j+ (b_{j+1}+\cdots +b_{j+p-1})=c_j + (d_{j+1}+\cdots +d_{j+p-1})
  $$
for $1\leq j\leq k$. Adding gives
$$
\sum_{j=1}^k [a_j+ (b_{j+1}+\cdots +b_{j+p-1})]=\sum_{j=1}^k [c_j + (d_{j+1}+\cdots +d_{j+p-1})].
  $$
Upon rearrangement,  one can see that a great deal of cancellation occurs in the previous equation. Note that by borrowing terms from the first summand and distributing them among the other summands, we obtain
\begin{align*}
\sum_{j=1}^k [a_j+ (b_{j+1}+\cdots +b_{j+p-1})]&=[a _1+(b_{k+1}+\cdots +b_p)]+\sum_{j=2}^k[a_j+(b_{j}+\cdots +b_{j+p-1})]\\
&=\sum_{i=1}^k a_i + \sum_{j=1}^{p-k}b_{k+j}+\sum_{j=2}^{k}(b_{j}+\cdots +b_{j+p-1}).
   \end{align*}
Likewise
$$
\sum_{j=1}^k [c_j + (d_{j+1}+\cdots +d_{j+p-1})]=\sum_{i=1}^k c_i+ \sum_{j=1}^{p-k}d_{k+j}+\sum_{j=2}^{k}(d_{j}+\cdots +d_{j+p-1}).
   $$
Finally, due to  the $p\times p$ property, the sums $\displaystyle \sum_{j=2}^{k}(b_{j}+\cdots +b_{j+p-1})$  and $\displaystyle \sum_{j=2}^{k}(d_{j}+\cdots +d_{j+p-1})$ are equal (in fact they are equal term by term), so cancellation gives
$$
\sum_{i=1}^k a_i + \sum_{j=1}^{p-k}b_{k+j}=\sum_{i=1}^k c_i + \sum_{j=1}^{p-k}d_{k+j},
   $$
as desired.
\end{proof}

Observe that the result of Lemma \ref{l:moremoresums2} still holds if the statement $1\leq k\leq p$ is replaced by
$1\leq k\leq \ell p$ where $\ell \in \Z ^+$.

\begin{thm}\label{t:k1}
Let $p$ be prime and $n=p^3$. If $R$ is a type-$p$ most-perfect square of order $n$, then $\theta (R)$ is an order-$n$ pandiagonal Franklin square of type $p$. Further, such squares $R$ exist for every prime $p$.
\end{thm}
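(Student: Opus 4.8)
The plan is to verify the defining properties of Definition \ref{d:Franklin} for $S=\theta(R)$ together with pandiagonality, and to dispatch existence by citation. Pandiagonality, the $p\times p$ property, and the $1/p$ row and column properties are already supplied by Proposition \ref{p:culminate}, so the entire remaining content is the Franklin pattern property: that every Franklin pattern sums to $\frac{n(n^2-1)}{2}$. The existence clause is immediate, since \cite{jL18} constructs type-$p$ most-perfect squares $R$ of order $p^3$ for every prime $p$.

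For the Franklin patterns I would first reduce to a single orientation: the right, down, and left patterns are the $90^\circ$, $180^\circ$, and $270^\circ$ rotations of the up-pattern, and every structural property of $S$ that the argument below invokes is preserved by rotation. The one point needing care is that my evaluation of individual segments will rest on the complementary property of $R$, which is stated only for broken main diagonals. A preliminary observation removes this asymmetry: applying Lemma \ref{l:diagsum} to the sublattice of positions lying a multiple of $p$ apart (available because $n/p$ is a multiple of $p$), $R$ becomes additively separable along each such coset, and from this the main-diagonal complementary sum $\frac{p(n^2-1)}{2}$ is forced along broken anti-diagonals as well. With both diagonal directions in hand, it suffices to prove that a single Franklin-up pattern $W$ sums to the magic sum.

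The heart of the argument is a decomposition of the $p^3$ entries of $W$ into $p^2$ groups of $p$ entries, each summing to $\frac{p(n^2-1)}{2}$; since $p^2\cdot\frac{p(n^2-1)}{2}=\frac{n(n^2-1)}{2}$, this is exactly the magic sum. Concretely, I would use the block-intersection rules of Section \ref{s:bent} to record, for each column of the frame, the row in which $W$ meets it. Grouping the columns of a non-central band $B_j$ by their offset within a $p\times p$ block, each fixed offset picks out $p$ entries of $S$ that share a common relative position inside the order-$p$ blocks $R_{i,j}$ and whose block-indices step diagonally. Undoing the permutation $\theta$ via Equation (\ref{e:theta}) carries these $p$ entries to $p$ entries of $R$ spaced $n/p$ apart along a broken main diagonal (for the diagonal bands) or a broken anti-diagonal (for the off-diagonal bands); either way the complementary property, now available in both directions, delivers the sum $\frac{p(n^2-1)}{2}$.

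The central band $B_{(p-1)/2}$, present when $p$ is odd, is where I expect the real difficulty, and it is precisely here that Lemma \ref{l:moremoresums2} earns its place. There the pattern lies entirely in one row of each block, but is split as $\alpha$ entries from a left block and $\beta=p-\alpha$ entries from a right block, i.e. $k$ entries at the left end and $p-k$ at the right end of a single row; Lemma \ref{l:moremoresums2} is exactly the tool that transports such a split between rows $n/p$ apart without changing its sum, thereby realigning the central contribution with the diagonal segments above and again producing groups summing to $\frac{p(n^2-1)}{2}$. The bookkeeping for this band—the two arms of the peak or valley, the parity of $(p-1)/2$, and the coincident central block $B^{i,0}_{(p-1)/2}=B^{i,1}_{(p-1)/2}$—together with the check that vertical translates of $W$ behave identically (they merely shift the relative positions used, and all invoked properties are location-independent), is the principal obstacle; the diagonal bands are by comparison routine.
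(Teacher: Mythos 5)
Your skeleton matches the paper's: Proposition \ref{p:culminate} disposes of everything except the Franklin pattern property, existence comes from \cite{jL18}, and the pattern sum is to be computed by breaking $W$ into groups of $p$ entries, pulled back through $\theta$, that are evaluated by the complementary property of $R$. Your preliminary observation that the complementary sum transfers to broken anti-diagonals (via Lemma \ref{l:diagsum} and additive separability on each coset of positions congruent mod $p$) is correct and is a genuine service, since the paper's treatment of the descending arms and of the rotated patterns tacitly needs it. But there is a gap in the main step.

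Your decomposition of a non-central band $B_j$ into $p$ groups of $p$ entries, one entry per block $B_j^i$ at a fixed offset, followed by a direct appeal to the complementary property, only works when the frame of $W$ is vertically aligned with the subsquare grid used by $\theta$. The frame is an arbitrary window of $n/p$ consecutive rows, and the Franklin pattern property must hold for every vertical translate. Columns are fine (the frame's block columns always coincide with the columns of the decomposition $(S_{\ell,m})$, so a $p$-column step in $S$ becomes an $n/p$-column step in $R$), but rows are not: writing the subsquare row index as $\ell=ap+b$, the map $\ell\mapsto\bar\ell$ sends the $p$ consecutive subsquare rows met by the frame to \emph{two} arithmetic progressions of common difference $p$, offset from each other by one subsquare row, unless $b=0$. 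Consequently the $p$ entries in one of your offset-groups do not lie $n/p$ rows apart in $R$, they are not a broken-diagonal coset, and the complementary property does not apply to them. This is exactly the difficulty the paper resolves with Lemma \ref{l:moremoresums2} applied to the \emph{non-central} bands: it keeps the coarser group $W_{j,t}^i\cup W_{p-1-j,t}^i$ ($\alpha$ entries at the left edge of $B_j^i$ together with $\beta$ entries at the right edge of the mirror block $B_{p-1-j}^i$, spanning a width that is a multiple of $p$) and shifts that whole split group vertically by a multiple of $p$ to realign the rows without changing its sum; only then does the complementary property finish the band. Your finer, single-band, single-offset grouping destroys precisely the left/right split structure that Lemma \ref{l:moremoresums2} requires, and a lone entry cannot be relocated sum-preservingly. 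The remark that vertical translates ``merely shift the relative positions used'' and that all invoked properties are ``location-independent'' is where this is hidden: the complementary property can be started anywhere, but a translated $W$ no longer selects complementary sets. You need the paper's pairing of $B_j$ with $B_{p-1-j}$ and the realignment lemma in every band, not just the central one; your treatment of the central band via Lemma \ref{l:moremoresums2} (the paper then finishes it with the $1/p$ row property rather than the complementary property) is the model for what the other bands also require.
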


\begin{proof}
Type-$p$ most-perfect squares of order $n=p^3$ exist due to \cite{jL18}. Also, the square $\theta (R)$ has the $1/p$-property for rows and columns, is pandiagonal, and has the $p\times p$ property by Proposition \ref{p:culminate}. It remains to show  that Franklin patterns in $\theta (R)$ add to the magic sum.

    Let $p=\alpha +\beta$ with $1\leq \alpha ,\beta <p$ and let $W$ be a Franklin-up pattern in $\theta (R)$ corresponding to this partition of $p$. We establish the following notation concerning $W$:
    \begin{itemize}
    \item Let $W_j^i$ denote $W\cap B_j^i$ and $w_j^i$ denote the sum of the elements of $W_j^i$ for $0\leq i, j \leq p-1$, with $j\ne \frac{p-1}{2}$.
    \item $W$ intersects $B_j^i$ in two consecutive rows of $B_j^i$. For $0\leq i, j\leq p-1$ with $j\ne \frac{p-1}{2}$, let $W_{j,t}^i$ denote the portion of $W_j^i$ coming from the top-most of these two rows in $B_j^i$, and let $W_{j,b}^i$ denote the portion of $W_j^i$ coming from the bottom-most of these two rows in $B_j^i$. Let $w_{j,t}^i$ denote the sum of the entries in $W_{j,t}^i$ and $w_{j,b}^i$ denote the sum of the entries in $W_{j,b}^i$. Note $W_j^i=W_{j,t}^i\cup W_{j,b}^i$ and $w_j^i=w_{j,t}^i+w_{j,b}^i$. The need for this distinction between ``$t$" and ``$b$" will be made clear later in the proof when we apply Lemma \ref{l:moremoresums2}.
    \item If $p$ is odd, let $W_{\frac{p-1}{2}}^{i,k}$ denote $B_{\frac{p-1}{2}}^{i,k} \cap W$, and let
    $w_{\frac{p-1}{2}}^{i,k}$ denote the sum of the elements of $W_{\frac{p-1}{2}}^{i,k}$.
    \item For $0\leq j < \frac{p-1}{2}$ we put $s_j=\displaystyle \sum_{i=0}^{p-1} (w_j^i +w_{p-1-j}^i)$.
    \item If $p$ is odd, put $s_{\frac{p-1}{2}}=\displaystyle \sum_{i=0}^{p-1} \left( w_{\frac{p-1}{2}}^{i,0}+w_{\frac{p-1}{2}}^{i,1}\right)$. In the special case that $B_{\frac{p-1}{2}}^{i,0}=B_{\frac{p-1}{2}}^{i,1}$, the corresponding term in $s_{\frac{p-1}{2}}$ is just
        $w_{\frac{p-1}{2}}^{i,0}$, not $ w_{\frac{p-1}{2}}^{i,0}+w_{\frac{p-1}{2}}^{i,1}$, as otherwise we would incur duplication.
    \end{itemize}
Observe that the sum of the entries in $W$ is $\displaystyle \sum_{0\leq j\leq \frac{p-1}{2}} s_j$. We claim that
$s_j=p^2(p^6-1)$ when $0\leq j < \frac{p-1}{2}$, and that $s_{\frac{p-1}{2}}=\displaystyle \frac{p^2(p^6-1)}{2}$ when $p$ is odd. Assuming this claim, we have that the sum of the entries of $W$ is
$$
\sum_{0\leq j\leq \frac{p-1}{2}} s_j =\frac{p-1}{2}[p^2(p^6-1)]+\frac{p^2(p^6-1)}{2}=\frac{p^3(p^6-1)}{2}=\frac{n(n^2-1)}{2}
  $$
when $p$ is odd, and the sum is
$$
\sum_{0\leq j\leq \frac{p-1}{2}} s_j =s_0 =p^2(p^6-1)=\frac{p^3(p^6-1)}{2}=\frac{n(n^2-1)}{2}
   $$
when $p=2$. In either case, the sum of the entries of $W$ is the magic sum, as desired.

To finish, we need to verify the claims about the sums $s_j$. We first present an overview: If $S=\theta (R)$, then we can follow the entries in $W\subseteq S$, and hence the terms of the sums $s_j$, back to $R$ by considering $\theta (S )$. Then we use the complementary property of $R$ together with Lemma \ref{l:moremoresums2} to replace sums $s_j$ with equivalent sums $\tilde s_j$ that have the claimed values.

And now on to  details of the argument, which takes two cases: $0\leq j<\frac{p-1}{2}$ and $j=\frac{p-1}{2}$. First suppose that $0\leq j < \frac{p-1}{2}$. Observe that for $0\leq i < p-1$,  each entry of $W_{j,t}^i\cup W_{p-1-j,t}^i$ is $p$ columns distant from its counterpart in $W_{j,t}^{i+1}\cup W_{p-1-j,t}^{i+1}$  in $S=\theta (R)$, with no repetition of columns. (Here ``counterparts" lies in the same relative position within a block.) Further, we note that the columns of the subsquare frame array $T$ for $W$ coincide with the columns of the subsquare array $(S_{\ell ,m})$ as in Equation \ref{e:arrayR}. (This is not  generally true for rows of $T$.) Also, for $0\leq i\leq p-1$, $W_{j,t}^i$ lies wholly within band $B_j$, which in turn coincides with a natural band of $p$ consecutive columns in the subsquare array $(S_{\ell ,m})$. A similar statement is true for $W_{p-1-j,t}^i$. Therefore subsquares in $S_{\ell ,m}$ containing a pair of counterparts in $W_{j,t}^i\cup W_{p-1-j,t}^i$ and $W_{j,t}^{i+1}\cup W_{p-1-j,t}^{i+1}$ must lie in consecutive columns in $S_{\ell ,m}$.
 Taking all of this into account, upon applying Equation (\ref{e:theta}), we find that elements in
$W^i_{j,t}\cup W^i_{p-1-j,t}$ are $p^2=n/p$ columns distant from their counterparts in $W^{i+1}_{j,t}\cup W^{i+1}_{p-1-j,t}$ within $R=\theta (S)$, with no repetition of columns. (Another way to view this is that the squares containing these counterparts are $p$ columns distant in the subsquare array $R_{\ell ,m}$.) These same observations and conclusion are also  true if $W_{j,t}^i \cup W_{p-1-j,t}^i$ is replaced with $W_{j,b}^i \cup W_{p-1-j,b}^i$.

We have established that as $i$ varies from $0$ to $p-1$, elements in $W_{j,t}^i\cup W_{p-j-1,t}^i$  are $p^2=n/p$ columns apart from their counterparts in $W_{j,t}^{i+1}\cup W_{p-j-1,t}^{i+1}$ in $R$, and similarly when ``$t$" is replaced by ``$b$". If these same statements were also true with ``rows" in place of ``columns", then we could repeatedly apply the complementary property of $R$ to obtain
\begin{equation*}
\begin{split}
s_j&=\sum_{i=0}^{p-1} w_j^i+w_{p-1-j}^i \\
&=\sum_{i=0}^{p-1} (w_{j,t}^i +w_{p-1-j,t}^i) +\sum_{i=0}^{p-1} (w_{j,b}^i+w_{p-1-j,b}^i) \\
&=p\left[\frac{p(p^6-1)}{2}\right] +p\left[\frac{p(p^6-1)}{2}\right]
=p^2(p^6-1),
\end{split}
   \end{equation*}
as claimed. (Here the multiplications by $p$ in the penultimate line are due to the fact that there are
$a+b=p$ members of $W_{j,t}^i\cup W_{p-1-j,t}^i$, and similarly for $W_{j,b}^i\cup W_{p-1-j,b}^i$). Unfortunately, because the rows of the frame array $T=(T_{\ell,m})$ do NOT generally coincide with a natural band of $p$ consecutive rows in $(S_{\ell ,m})$, it is not always true that elements in $W_{j,t}^i\cup W_{p-j-1,t}^i$  are $p^2=n/p$ rows apart from their counterparts in $W_{j,t}^{i+1}\cup W_{p-j-1,t}^{i+1}$ in $R$.

Lemma \ref{l:moremoresums2} can be used to rectify this problem. Elements in $W_{j,t}^{i+1}\cup W_{p-j-1,t}^{i+1}$ may not be $p^2=n/p$ rows distant in $R$ from elements in $W_{j,t}^i\cup W_{p-j-1,t}^i$, but this distance is some multiple of $p$ due to our construction of $W$ and to Equation (\ref{e:theta}). By moving vertically in $R$ from $W_{j,t}^{i+1}\cup W_{p-j-1,t}^{i+1}$ by some appropriate multiple of $p$ units (possibly zero), we encounter a set $\tilde W_{j,t}^{i+1}\cup \tilde W_{p-j-1,t}^{i+1}$ of $p$ elements in $R$ that is $n/p=p^2$ rows distant from $W_{j,t}^i\cup W_{p-j-1,t}^i$:
$$
\begin{array}{cccc}
W_{j,t}^{i+1} & \ & \ & W_{p-1-j,t}^{i+1} \\
\cdot \cdot \cdot \cdot  & \ & \ & \cdot \cdot \cdot \cdot \cdot \cdot \cdot \\
\downarrow        & \qquad & \qquad & \downarrow \\
\cdot \cdot \cdot \cdot & \ & \ & \cdot \cdot \cdot \cdot \cdot \cdot \cdot  \\
\tilde W_{j,t}^{i+1} & \ & \ & \tilde W_{p-1-j,t}^{i+1}
\end{array}
   $$
Further, by applying Lemma \ref{l:moremoresums2}, we have
$$
w_{j,t}^{i+1}+w_{p-j-1,t}^{i+1}=\tilde w_{j,t}^{i+1}+ \tilde w_{p-j-1,t}^{i+1},
  $$
where $\tilde w_{j,t}^{i+1}$ is the sum of the elements in $\tilde W_{j,t}^{i+1}$, and likewise for $\tilde w_{p-j-1,t}^{i+1}$. The vertical nature of this replacement has no effect on the relationship among columns: it is still true that an element in
$W_{j,t}^i\cup W_{p-j-1,t}^i$ and its counterpart in $\tilde W_{j,t}^{i+1}\cup \tilde W_{p-j-1,t}^{i+1}$ are $n/p=p^2$ columns distant from one another. These statements are also true if ``$t$" is replaced by ``$b$".  By making these replacements systematically and judiciously, so as to avoid repetition of rows, we may apply Lemma \ref{l:moremoresums2} together with the complementary property in $R$ to obtain
\begin{equation}\label{e:sj}
\begin{split}
s_j&=\sum_{i=0}^{p-1} w_j^i+w_{p-1-j}^i \\
&=\sum_{i=0}^{p-1} (w_{j,t}^i +w_{p-1-j,t}^i) +\sum_{i=0}^{p-1} (w_{j,b}^i+w_{p-1-j,b}^i) \\
&=\sum_{i=0}^{p-1} (\tilde w_{j,t}^i +\tilde w_{p-1-j,t}^i) +\sum_{i=0}^{p-1} (\tilde w_{j,b}^i+\tilde w_{p-1-j,b}^i) \\
&=p\left[\frac{p(p^6-1)}{2}\right] +p\left[\frac{p(p^6-1)}{2}\right]
=p^2(p^6-1),
\end{split}
   \end{equation}
thereby proving the first portion of our claim on the sums $s_j$.

  Finally, we address the claimed value of $s_{\frac{p-1}{2}}$. Without loss of generality we assume that $B^{0,0}_{\frac{p-1}{2}}=B^{0,1}_{\frac{p-1}{2}}$. For each $1\leq i \leq p-1$, we may use Lemma \ref{l:moremoresums2} to
  consider elements $\tilde W_{\frac{p-1}{2}}^{i,0}\cup \tilde W_{\frac{p-1}{2}}^{i,1}$ lying above
    $W_{\frac{p-1}{2}}^{i,0}\cup W_{\frac{p-1}{2}}^{i,1}$ and in the same row as $W^{0,0}_{\frac{p-1}{2}}$ as illustrated here:
$$
\begin{array}{ccccc}
\overbrace{\ast \ast \ast}^{ \tilde W^{2,0}_{\frac{p-1}{2}}} & \overbrace{\ast \ast\ast \ast}^{\tilde W^{1,0}_{\frac{p-1}{2}}} & \overbrace{\ast\ast\ast\ast\ast\ast \ast \ast}^{ W^{0,0}_{\frac{p-1}{2}}} & \overbrace{\ast\ast\ast}^{\tilde W^{1,1}_{\frac{p-1}{2}}} & \overbrace{\ast\ast\ast\ast}^{\tilde W^{2,1}_{\frac{p-1}{2}}} \\
\uparrow &     \uparrow &              \                  & \uparrow & \uparrow \\
\ & \ast\ast\ast\ast &  \                  &\ast\ast\ast & \  \\
 \uparrow &  W^{1,0}_{\frac{p-1}{2}}&              \                    & W^{1,1}_{\frac{p-1}{2}} & \uparrow \\
\ast\ast\ast & \     &     \               & \ &\ast\ast\ast\ast \\
 W^{2,0}_{\frac{p-1}{2}}   & \ & \     &\ & W^{2,1}_{\frac{p-1}{2}}
\end{array}
   $$
 If we let
   $\tilde w_{\frac{p-1}{2}}^{i,0} + \tilde w_{\frac{p-1}{2}}^{i,1}$ be the corresponding sum of elements, we  find by applying the $1/p$ row property of $\theta (R)$ (Proposition \ref{p:pcolprop}) that
   \begin{equation}\label{e:sp}
   \begin{split}
   s_{\frac{p-1}{2}}&=
   w_{\frac{p-1}{2}}^{0,0}+\sum_{i=1}^{p-1} \left( w_{\frac{p-1}{2}}^{i,0}+w_{\frac{p-1}{2}}^{i,1}\right)\\
   &=w_{\frac{p-1}{2}}^{0,0}+\sum_{i=1}^{p-1} \left( \tilde w_{\frac{p-1}{2}}^{i,0}+\tilde w_{\frac{p-1}{2}}^{i,1}\right)\\
   &=\frac{1}{p} \left[\frac{n(n^2-1)}{2}\right]
   =\frac{1}{p}\left[\frac{p^3(p^6-1)}{2}\right]
   =\frac{p^2(p^6-1)}{2},
   \end{split}
      \end{equation}
 as claimed. The other Franklin pattern categories (right, down, and left) have similar verifications.

\end{proof}

\section{Type-$p$ Franklin Squares of Order $kp^3$ with $k>1$.}\label{s:greater}
In this section we indicate how type-$p$ Franklin squares of order $kp^3$ can be defined, and argue that these squares exist when $k=p^r$ for $r\geq 0$. This extends the results of Sections \ref{s:bent} and \ref{s:cube}, where we addresed the special case $k=1$. Terminology and ideas of
Sections \ref{s:bent} and \ref{s:cube} will be used throughout.

The description in Section \ref{ss:dubya} characterizes type-$p$ Franklin squares of order $kp^3$ except for the Franklin patterns. As in Section \ref{s:bent}, we focus on describing Franklin-up patterns; the other varieties (right, down, and left) are obtained from Franklin-up locations by rotating the ambient square. Let $S$ be a square of order $n=kp^3$, let $\alpha +\beta =p$ with $1\leq \alpha ,\beta <p$, and let $W$ be a Franklin-up pattern in $S$. The frame for $W$ consists of $\frac{n}{p}=kp^2$ consecutive rows of $S$. As in Section \ref{s:bent}, we can partition this frame into a $p\times p^2$ array $(T_{i,j})$ where $T_{i,j}$ is an array of size $kp\times kp$. Therefore, each of the squares $B_j^i$ and $B_{p-1-j}$ should be of size $kp\times kp$, as should be $B_{\frac{p-1}{2}}^{k,l}$ in case $p$ is odd. To determine $W$ it is necessary to describe the intersection of these squares with $W$.

We first address $W\cap B_j^i$ with $0\leq j <\frac{p-1}{2}$. View $B_j^i$ as a $k\times k$ array whose entries are $p\times p$ subarrays. If $j$ is even, recall that as $i$ increases from $0$ to $p-1$, the squares $B_j^i$ lie on a broken main diagonal of the array $(T_{i,j})$. In this case we declare that $W$ intersects $B_j^i$ in each of the $p \times p$ submatrices on the main block diagonal of $B_j^i$ in the manner described in Section \ref{s:bent} (Figure \ref{f:blockint}). If $j$ is odd, recall that the squares $B_j^i$ lie on a broken off diagonal of the array $(T_{i,j})$. In this case we declare that $W$ intersects $B_j^i$ in each of the $p \times p$ submatrices occupying the off block diagonal of $B_j^i$ in the manner of Section \ref{s:bent}. Intersections of $W$ with $B_{p-1-j}^i$ are determined similarly. A figure illustrating $W\cap B_j^i$ with $j$ even is shown below, where the smaller arrays along the main diagonal are of size $p\times p$.
$$
W\cap B_j^i=
{\tiny \begin{array}{|ccccccc|}
\hline
\ast & \multicolumn{1}{c|}{\ }& & & & &  \\
     &\multicolumn{1}{c|}{\ast \ast} & & & & &\\\cline{1-4}
     & \multicolumn{1}{c|}{\ } &\ast &\multicolumn{1}{c|}{\ } & & &  \\
     &\multicolumn{1}{c|}{\ } & &\multicolumn{1}{c|}{\ast \ast} & & &\\\cline{3-4}
     & & &          &\ddots & & \\\cline{6-7}
     & & &          &   \multicolumn{1}{c|}{\ }    &\ast & \\
     & & &          &   \multicolumn{1}{c|}{\ }    &     &\ast \ast\\
     \hline
\end{array}}.
   $$
Also, here is a frame showing all  blocks $B_j^i$ in the classical case $p=2$ and $n=2\cdot 2^3=16$:
$${\tiny
\begin{array}{|cccc|cccc|cccc|cccc|}
\hline
\ast &\multicolumn{1}{c|}{} & & & & & & & & & & & &\multicolumn{1}{c|}{}  & &  \ast \\
    & \multicolumn{1}{c|}{\ast} & & & &\leftarrow & B_0^0& & &B_1^0 &\rightarrow & & &\multicolumn{1}{c|}{}  &\ast &   \\\cline{1-4}\cline{13-16}
    & \multicolumn{1}{c|}{}& \ast  & & & & & & & & & & & \multicolumn{1}{c|}{\ast} & &  \\
   & \multicolumn{1}{c|}{}& & \ast & & & & & & & & &\ast &\multicolumn{1}{c|}{}  & &   \\
\hline
    & & & &\ast & \multicolumn{1}{c|}{} & & & &\multicolumn{1}{c|}{}  & & \ast & & & &   \\
   &B_0^1 & \rightarrow & & &\multicolumn{1}{c|}{\ast} & & & &\multicolumn{1}{c|}{}  &\ast & & &\leftarrow &B_1^1 &   \\\cline{5-12}
    & & & & &\multicolumn{1}{c|}{}  &\ast & & &\multicolumn{1}{c|}{\ast} &  & & & & &   \\
    & & & & &\multicolumn{1}{c|}{}  & &\ast &\ast &\multicolumn{1}{c|}{}  &  & & & & &   \\
\hline
\end{array}}
   $$

It remains to address the intersection of the Franklin-up pattern $W$ with the middle band $B_{\frac{p-1}{2}}$ in the case that $p$ is odd. Unlike the other bands, we will continue to partition
$B_{\frac{p-1}{2}}$ into $p\times p$ subsquares as we did in Section \ref{s:bent}. (This is reasonable because we do not apply $\theta $ to this band in Theorem \ref{t:MPtoF2}, and so we do not need a partition into squares of order $\frac{n}{p^2}=kp$.) Further, we define $B_{\frac{p-1}{2}}^{i,0}$ and $B_{\frac{p-1}{2}}^{i,1}$, as well as their intersections with $W$ just as we did in Section \ref{s:bent}, except that $0\leq i\leq kp-1$ rather than $0\leq i\leq p-1$ (Figure \ref{f:blockint2}). We note that in the special case that $B_{\frac{p-1}{2}}^{i,0}$ and $B_{\frac{p-1}{2}}^{i,1}$ coincide, then the intersection with $W$ is the entire bottom row of this square; this will happen when $k$ is odd. Meanwhile, in the special case that $B_{\frac{p-1}{2}}^{i,0}$ and $B_{\frac{p-1}{2}}^{i,1}$ are adjacent (borders touching) then their intersection with $W$ consists of the entire bottom row of both squares. This latter case, which happens when $k$ is even, produces a row in the frame for $W$ that intersects $W$ in $2p$ locations rather than $p$ locations. An illustration is given in the following figure, which shows the middle band $B_1$ in the case $n=kp^3=3\cdot 3^3$. Each entry is a $3\times 3$ array; the asterisks are the $B_1^{i,\ell }$'s. The boxed asterisk is $B_1^{8,1}$; its intersection with $W$ is shown in the right portion of the figure (assuming $\alpha =1$ and $\beta =2$).

$$
{\tiny
\begin{array}{|ccccccccc|}
\hline
  & & & &\ast & & & &   \\
  & & &\ast & &\ast & & &   \\
  & & &\ast & &\ast & & &   \\
  & &\ast & & & &\ast & &   \\
  & &\ast & & & &\ast & &   \\
  &\ast & & & & & & \ast &   \\
  & \ast& & & & & &\ast &   \\
 \ast & & & & & & & & \ast  \\
 \ast & & & & & & & & \boxed{\ast}  \\
 \hline
\end{array}}
\qquad
B_1^{8,1}\cap W=
{\tiny
\begin{array}{|c|c|c|}
\hline
 & & \\
 \hline
 & & \\
 \hline
\  &\bullet & \bullet \\
 \hline
\end{array}
}
   $$

\begin{thm}\label{t:MPtoF2}
Let $p$ be prime, $k\in \Z^+$, and $n=kp^3$. If $R$ is a type-$p$ most-perfect square of order $n$ then $\theta (R)$ is an order-$n$ pandiagonal type-$p$ Franklin square. Further, such squares $R$ exist when $k=p^r$ for any prime $p$ and any $r\geq 0$.
\end{thm}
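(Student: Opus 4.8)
The plan is to reduce all but one property to results already established and to obtain the last property by adapting the proof of Theorem \ref{t:k1}. The existence assertion is immediate: when $k=p^r$ we have $n=kp^3=p^{r+3}$, and since $r\geq 0$ gives $r+3\geq 2$, the linear construction of \cite{jL18} produces a type-$p$ most-perfect square $R$ of order $n$. Since $n=kp^3$ is triply divisible by $p$, Proposition \ref{p:culminate} already guarantees that $S=\theta(R)$ is semi-magic and pandiagonal and has both the $p\times p$ property and the $1/p$ row and column properties. Hence the only thing left to verify is that every Franklin pattern of $S$, as defined in this section, sums to $\frac{n(n^2-1)}{2}$.

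For this I would follow the proof of Theorem \ref{t:k1} line by line, the essential change being that each block $B_j^i$ (and each central block $B_{(p-1)/2}^{i,\ell}$) now has size $kp\times kp$ and is met by a Franklin-up pattern $W$ along its internal block-diagonal of $k$ many $p\times p$ sub-blocks, each in the $\alpha/\beta$ fashion of Figure \ref{f:blockint}. As before I would write the total as $\sum_{0\leq j\leq(p-1)/2}s_j$, split each $w_j^i$ into top and bottom pieces (now one such pair per diagonal sub-block), and exploit the one structural fact that survives unchanged: because the block size $kp$ equals $n/p^2$, the block-columns of the frame array $(T_{i,j})$ coincide with the block-columns of the subsquare array $(S_{\ell,m})$ of Equation (\ref{e:arrayR}). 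Consequently, applying Equation (\ref{e:theta}) sends the counterparts of a given entry in successive block-rows to positions $n/p$ columns apart in $R=\theta(S)$, with no column repeated.

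The paired bands $0\leq j<(p-1)/2$ are then treated exactly as in Theorem \ref{t:k1}: the row offsets in $R$ are only guaranteed to be multiples of $p$, so I invoke Lemma \ref{l:moremoresums2} to slide each group vertically by a suitable multiple of $p$ to a realigned set that is $n/p$ rows distant without altering its sum, whereupon the complementary property of $R$ collapses each resulting broken main diagonal to $\frac{p(n^2-1)}{2}$. The only arithmetic change is one of scale: each block $B_j^i$ now contributes $kp$ entries to $W$, so a band-pair yields $2kp$ complementary diagonals rather than $2p$, and $s_j=kp^2(n^2-1)$. The central band (for $p$ odd) is handled directly in $S$ by the $1/p$ row property, as in Theorem \ref{t:k1}, except that I must account for the phenomena noted in this section: when $k$ is odd the two middle blocks $B_{(p-1)/2}^{i,0}$ and $B_{(p-1)/2}^{i,1}$ coincide, and when $k$ is even they are adjacent and jointly contribute a $2p$-entry row. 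In either case the realigned central entries fill exactly one of the $p$ natural parts of a row of $S$, so the $1/p$ row property gives their total as $\frac1p\cdot\frac{n(n^2-1)}{2}$. Assembling these band totals reproduces the magic sum precisely as in Theorem \ref{t:k1}.

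I expect the principal obstacle to lie in the paired-band bookkeeping. Because each $B_j^i$ now splits into $k$ separate $p\times p$ diagonal sub-blocks, each sitting in its own pair of rows, I must check that as the block-row index $i$, the internal diagonal position $\ell$, and the within-block position all vary, the traced-back and vertically realigned entries partition cleanly into complete complementary diagonals of $R$ of spacing $n/p$, with no row or column used twice and none omitted. The column side is automatic from the preceding paragraph, but verifying that the vertical realignments supplied by Lemma \ref{l:moremoresums2} can be chosen simultaneously and consistently across all $k$ sub-blocks is the delicate point.
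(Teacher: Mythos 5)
Your proposal matches the paper's own (abridged) proof essentially step for step: Proposition \ref{p:culminate} handles everything except the Franklin patterns, the block $B_j^i$ is split into $2k$ row-pieces instead of $2$, Lemma \ref{l:moremoresums2} realigns the traced-back entries to complementary diagonals of $R$ giving $s_j=kp^2(n^2-1)=\frac{n(n^2-1)}{p}$, the central band is handled via the $1/p$ row property, and existence follows from \cite{jL18}. The bookkeeping issue you flag at the end is real but is exactly the point the paper also leaves implicit, so your argument is correct and takes the same route.
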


\begin{proof}
The proof, which shall be abridged, closely follows that for Theorem \ref{t:k1}. Notation will be identical to that of Theorem \ref{t:k1}, with the exception that
$w_j^i$ will be split into $2k$ summands rather than just two summands $w_{j,t}^i$ and $w_{j,b}^i$. This is due to the fact that $W$ intersects $B_j^i$ in $2k$ rows rather than $2$ rows. (A similar adjustment is made for $w_{p-1-j}^i$.)

Let $S=\theta (R)$. Due to Proposition \ref{p:culminate}, to establish that $S$ is a type-$p$ Franklin square it remains to show that entries in Franklin patterns add to the magic sum. We verify this for Franklin-up patterns only, the other patterns have similar verifications. Following the proof of Theorem \ref{t:k1}, and Equation (\ref{e:sj}) in particular, the use of Lemma \ref{l:moremoresums2} and the complementary property in $R$ gives
$$
s_j=\sum_{i=0}^{p-1}w_j^i+w_{p-1-j}^i=\underbrace{p\left[\frac{p(n^2-1)}{2}\right]+\cdots +p\left[\frac{p(n^2-1)}{2}\right]}_{2k\ {\rm times}}=\frac{n(n^2-1)}{p}
  $$
when $0\leq j < \frac{p-1}{2}$. Likewise, in the case that $p$ is odd, applying Lemma \ref{l:moremoresums2} together with the $1/p$-row property of $S$ as in
Equation (\ref{e:sp}) gives
$s_{\frac{p-1}{2}}=\frac{n(n^2-1)}{2p}$. It follows that the sum of the entries in W is
$$
\sum_{0\leq j\leq \frac{p-1}{2}} s_j=\frac{n(n^2-1)}{2},
  $$
as desired.

Finally,
the existence of  type-$p$ most-perfect squares of order $p^s$ ($s\geq 3$) is guaranteed by \cite{jL18}.

\end{proof}


\begin{thebibliography}{99}

\bibitem{mA04} M.\ Ahmed, {\em How many squares are there, Mr.\ Franklin? Constructing and enumerating Franklin squares}, Amer.\ Math.\ Monthly {\bf 111} (2004), 394--410.

\bibitem{cH07}
C.\ Hurkens, {\em Plenty of Franklin magic squares, but none of order 12}, (2007),\\ {\tt http://www.win.tue.nl/bs/spor/2007-06.pdf}.

\bibitem{cJ71}
C.\ Jacobs, {\em A re-examination of the Franklin square}, The Mathematics Teacher {\bf 64}  (1971), 55--62.

\bibitem{jL18}
J.\ Lorch, {\em Linear type-$p$ most-perfect squares}, preprint.

\bibitem{eM97}
E.\ McClintock, {\em On the most perfect forms of magic squares, with methods for their production}, Amer.\ J.\ Math.\
 {\bf 19} (1897), 99-120.

\bibitem{rN16}
R.\ Nordgren, {\em On Franklin and complete magic square matrices}, Fibonacci Quart.\ {\bf 54} no.\ 4 (2016), 304-318.

\bibitem{kO98}
Ollerenshaw and Bree, {\em Most Perfect Pandiagonal Squares}, Institute of Mathematics and its Applications, 1998.

\bibitem{pP01}
P.\ Pasles, {\em The lost squares of Dr.\ Franklin: Ben Franklin's missing squares and the secret of the magic circle}, Amer.\ Math.\ Monthly {\bf 108} (2001), 489-511.

\bibitem{cP19}
C.\ Planck, {\em Pandiagonal magic squares of order 6 and 10 with minimal numbers}, The Monist {\bf 29} (1919), 307-316.

\bibitem{pL06}
D.\ Schindel, M.\ Rempel, and P.\ Loly, {\em Enumerating the bent diagonal squares of Dr Benjamin Franklin FRS}, Proc.\ R.\ Soc.\ A {\bf 462} (2006), 2271--2279.
\end{thebibliography}
\end{document}